\begin{document}
\newtheorem{theorem}{Theorem}[section]
\newtheorem{lemma}[theorem]{Lemma}
\newtheorem{definition}[theorem]{Definition}
\newtheorem{conjecture}[theorem]{Conjecture}
\newtheorem{proposition}[theorem]{Proposition}
\newtheorem{algorithm}[theorem]{Algorithm}
\newtheorem{corollary}[theorem]{Corollary}
\newtheorem{observation}[theorem]{Observation}
\newtheorem{problem}[theorem]{Open Problem}
\newtheorem{remark}[theorem]{Remark}
\newcommand{\noin}{\noindent}
\newcommand{\ind}{\indent}
\newcommand{\om}{\omega}
\newcommand{\I}{\mathcal I}
\newcommand{\pp}{\mathcal P}
\newcommand{\ppp}{\mathfrak P}
\newcommand{\N}{{\mathbb N}}
\newcommand{\LL}{\mathbb{L}}
\newcommand{\R}{{\mathbb R}}
\newcommand{\E}[1]{\mathbb{E}\left[#1 \right]}
\newcommand{\V}{\mathbb Var}
\newcommand{\Prob}{\mathbb{P}}
\newcommand{\eps}{\varepsilon}

\newcommand{\Tv}{P}

\newcommand{\mT}{\mathcal{T}}
\newcommand{\mS}{\mathcal{S}}
\newcommand{\mA}{\mathcal{A}}
\newcommand{\mB}{\mathcal{B}}

\newcommand{\Cyc}[1]{\mathrm{Cyc}\left(#1\right)}
\newcommand{\Seq}[1]{\mathrm{Seq}\left(#1\right)}
\newcommand{\Mul}[1]{\mathrm{Mul}\left(#1\right)}
\newcommand{\Mulo}[1]{\mathrm{Mul}_{>0}\left(#1\right)}
\newcommand{\Set}[1]{\mathrm{Set}\left(#1\right)}
\newcommand{\Setd}[1]{\mathrm{Set}_{d}\left(#1\right)}

\title{On the limiting distribution of the metric dimension for random forests}

\author{Dieter Mitsche}
\address{(DM) Laboratoire J.A. Dieudonn\'{e}, Universit\'{e} de Nice Sophia-Antipolis. Parc Valrose, 06108 Nice cedex 02, France}
\email{\tt dmitsche@unice.fr}

\author{Juanjo Ru\'e}
\address{(JR) Freie Universit\"at Berlin, Institut f\"ur Mathematik und Informatik, Arnimallee 3, 14195 Berlin, Germany}
\email{jrue@zedat.fu-berlin.de}

\thanks{J.\,R.~was partially supported by the Spanish MICINN grant MTM2011-22851, the FP7-PEOPLE-2013-CIG project \emph{CountGraph} (ref. 630749), the DFG within the research training group \emph{Methods for Discrete Structures} (GRK1408), and the \emph{Berlin Mathematical School.}}

\keywords{random graphs, metric dimension, random trees, analytic combinatorics}

\begin{abstract}

The metric dimension of a graph $G$ is the minimum size of a subset $S$ of vertices of $G$ such that all other vertices are uniquely determined by their distances to the vertices in $S$. In this paper we investigate the metric dimension for two  different models of random forests, in each case obtaining normal limit distributions for this parameter.
\end{abstract}

\maketitle

\section{Introduction}\label{sec:intro}

Let $G=(V,E)$ be a finite, simple graph with $|V|=n$ vertices.
For a subset $R \subseteq V$ with $|R|=r$, and a vertex $v \in V$, let $d_R(v)$ be the $r$-dimensional vector whose $i$-th coordinate is the length of the shortest path between $v$ and the $i$-th vertex of $R$. If no such path exists because the considered vertices are in different connected components, the distance is defined to be $\infty$. We call $R \subseteq V$ a \emph{resolving set} if for any pair of vertices $u, v \in V$, $d_R(u) \neq d_R(v)$. For instance, the full vertex set $V$ is always a resolving set, and so is $R=V\setminus \{w\}$ for every choice of $w$. The general problem in this domain is to find \emph{minimal} resolving sets. The \emph{metric dimension} $\beta(G)$ of a connected graph $G$ with $n \geq 2$ vertices (or simply $\beta$, if the graph we consider is clear from the context) is then the smallest cardinality of a resolving set. If $G$ is an isolated vertex, then we define $\beta(G):=1$. Observe also that for a graph $G$ with connected components $G_1, \ldots, G_k$, $k \geq 2$, none of them being an isolated vertex, we have $\beta(G)=\sum_{i=1}^k \beta(G_i)$: in order to distinguish two vertices from the same connected component, a minimal resolving set of this connected component has to be chosen. If on the other hand $G$ has connected components $G_1,\ldots,G_k$ and at least one isolated vertex, then $\beta(G)=\left(\sum_{i=1}^k \beta(G_i)\right) -1$, as one isolated vertex is distinguished from all others without choosing the vertex: it will be the only vertex at distance $\infty$ from everyone else. Note that for a graph $G$ on $n \geq 2$ vertices we have the trivial inequalities $1 \leq \beta(G) \leq n-1$, with the lower bound attained for a path of length $n$, and the upper bound for the complete graph $K_n$ (or the empty graph).

This parameter was initially introduced by Slater~\cite{Sla75}, and Harary and Melter~\cite{Har76}. As a start, Slater~\cite{Sla75} determined a characterization of the metric dimension of trees, which was then independently rediscovered by Harary and Melter~\cite{Har76}: for any tree $T$ on $n$ vertices which is not a path, the metric dimension of $T$ is $|L|-|K|$, where $L$ is the set of leaves of $T$ and $K$ is the set of vertices that have degree greater than two and that are connected by paths whose interior vertices are degree-two-vertices to one or more leaves. Moreover, this characterization is constructive: one can find a resolving set of size $|L|-|K|$ by removing from $L$ one of the leaves associated with each vertex in $K$.

The same result is obtained by means of the characterization of the metric dimension for trees of Kuller et al.~\cite{Kul96}: for any tree $T$ which is not a path, and a vertex $v \in V(T)$, and any two edges $e, f \in E(T)$, the equivalence relation $ =_v$ is defined as follows: $e =_v f$ iff there is a path in $T$ including $e$ and $f$ that does not have $v$ as an internal vertex. The subgraphs induced by the edges of the equivalence classes of $E(T)$ are called the \emph{bridges} of $T$ relative to $v$. Define then the \emph{legs} at $v$ to be the bridges which are paths, and denote by $\ell_v$ the number of legs at $v$. The metric dimension of the tree then satisfies the relation $\beta(T)=\sum_{v: \ell_v > 1} (\ell_v - 1)$.

Two decades later, Khuller,  Raghavachari and Rosenfeld~\cite{Kul96} gave a linear-time algorithm for computing the metric dimension of a tree, and they characterized the graphs with metric dimensions $1$ and $2$ (in the first case, paths are the unique graphs with metric dimension equal to 1). Later on, on the other end, Chartrand,  Eroh, Johnson and Oellermann~\cite{CEJO00} gave necessary and sufficient conditions for a graph $G$ to satisfy $\beta(G)=n-1$ or $\beta(G)=n-2$.

The metric dimension has deep connections with other graph parameters: denoting by $D=D(G)$ the diameter of a graph $G$, it was observed in~\cite{Kul96} that $n \leq D^{\beta-1}+\beta$. Recently, Hernando,  Mora,  Pelayo,  Seara and Wood~\cite{HMPSW10} proved that $n \leq (\lfloor \frac{2D}{3}\rfloor +1)^{\beta}+\beta \sum_{i=1}^{\lceil D/3 \rceil} (2i-1)^{\beta-1}$, and  gave extremal constructions that show that this bound is sharp. Moreover, in~\cite{HMPSW10} graphs of metric dimension $\beta$ and diameter $D$ were characterized. The metric dimension of the cartesian product of graphs was investigated by C\'{a}ceres, Hernando et al.~\cite{Cac07}, and the relationship between $\beta(G)$ and the {\em determining number} of $G$ (the smallest size of a set $S$ such that every automorphism of $G$ is uniquely determined by its action on $S$) was studied by C\'{a}ceres, Garijo et al.~\cite{CGPS10}. Also, Bailey and Cameron~\cite{Bailey11} studied the  metric dimension of graphs with strong symmetry properties, such as distance transitive graphs (where the orbits on pairs of vertices are precisely the distance classes).

Concerning algorithmic questions, the problem of finding the metric dimension is known to be NP-complete for general graphs (see~\cite{GJ79, Kul96}). Recently, D\'{\i}az et al.~\cite{Diaz12} showed that determining $\beta(G)$ is NP-complete for planar graphs, and the authors also gave a polynomial-time algorithm for determining the metric dimension of an outerplanar graph. Furthermore, in~\cite{Kul96} a polynomial-time algorithm approximating $\beta(G)$ within a factor  $2 \log n$ was given. On the other hand, Beerliova et al.~\cite{Beerliova} showed that the problem is inapproximable within $o(\log n)$ unless P=NP. Hauptmann et al.~\cite{Hauptmann} then strengthened the result and showed that unless NP $\subseteq$ DTIME$(n^{\log \log n})$, for any $\eps > 0$, there is no $(1-\eps) \log n$-approximation for determining $\beta(G)$, and finally Hartung et al.~\cite{Hartung12} extended the result by proving that the metric dimension problem is still inapproximable within a factor of $o(\log n)$ on graphs with maximum degree three.

In this paper we study the metric dimension of forests in different random models. Our first contribution is the analysis of the limiting probability of the metric dimension for a random tree, chosen uniformly at random among all trees with $n$ vertices. The same result applies for random planar forests in the corresponding similar model. These models are reminiscent of the random planar graph model introduced by Denise, Vanconcellos and Welsh~\cite{DVW} (see also~\cite{Rpg}). All asymptotic results throughout the following lines are as $n \rightarrow \infty $. In particular, our first result is the following one:

\begin{theorem}\label{thm:main-tree}
Let $T_n, F_n$ be a random tree (respectively random forest) chosen uniformly at random among all trees (respectively forests) with $n$ vertices. Then, each of the sequences of random variables
$$\frac{\beta(T_n) - \E{\beta(T_n)}}{\sqrt{\V{\beta(T_n)}}},\,\, \frac{\beta(F_n) - \E{\beta(F_n)}}{\sqrt{\V{\beta(F_n)}}} $$
converge in distribution to a standard normal distribution when $n\rightarrow \infty$. Additionally, $\E{\beta(T_n)}= \E{\beta(F_n)} = \mu n (1+o(1)) $ and $\V{\beta(T_n)} = \V{\beta(F_n)}  =\sigma^2 n (1+o(1))$, and $\mu \simeq 0.14076941$, $\sigma^2 \simeq 0.063748151$.
\end{theorem}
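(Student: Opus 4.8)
The plan is to recast the problem analytically. By Slater's characterization quoted above, for any $n$-vertex tree $T$ that is not a path one has $\beta(T)=|L|-|K|=\sum_v(\ell_v-1)^+$, where $\ell_v$ is the number of legs at $v$ (pendant paths ending in a leaf all of whose interior vertices have degree two), and $t^+:=\max(t,0)$. Since paths make up an exponentially small fraction of all $n$-vertex trees and contribute only $\beta=1$, they are irrelevant to the limit law, so it suffices to analyse the additive parameter $\sum_v(\ell_v-1)^+$ on a uniformly random tree. I would attack this with the symbolic method together with singularity perturbation (the quasi-powers framework of Hwang).

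First I would write down a combinatorial specification. Working with rooted trees (the root being a bookkeeping device; $\beta$ is read off the underlying unrooted tree), set $T(x,u)=\sum_t x^{|t|}u^{\beta(t)}$, where $x$ marks vertices and $u$ marks metric dimension. The decomposition classifies the subtrees hanging at each vertex into legs --- pendant paths, of generating function $\Seq{x}$ in the appropriate (labelled, unlabelled, or plane) normalization --- and genuinely branching subtrees, and, when a vertex is closed off with $\ell$ attached legs, replaces the naive weight $1$ by $u^{(\ell-1)^+}$. Because this weight is nonlinear in $\ell$, the cleanest encoding uses an auxiliary catalytic variable that records the current number of legs at the root and is specialized at the instant the vertex receives its final children; equivalently one roots at a leaf and treats the unique root-leaf pendant path separately. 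The outcome is a functional equation, or a finite system, of the P\'olya / implicit type $T(x,u)=x\,\Phi\bigl(x,u,T(x,u)\bigr)$ with $\Phi$ analytic near the relevant point.

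Next I would run singularity analysis for $u$ in a complex neighbourhood of $1$. The equation is of smooth implicit-function (Drmota--Lalley--Woods) type, so $T(x,u)$ has a unique dominant square-root singularity $\rho(u)$, with $T(x,u)=g(u)-h(u)\sqrt{1-x/\rho(u)}+\cdots$ and $\rho,g,h$ analytic and non-vanishing near $u=1$; hence $[x^n]T(x,u)\sim C(u)\rho(u)^{-n}n^{-3/2}$ uniformly for $u$ near $1$. Hwang's quasi-powers theorem then gives the Gaussian limit for $\beta(T_n)$, together with $\E{\beta(T_n)}=\mu n(1+o(1))$ and $\V{\beta(T_n)}=\sigma^2 n(1+o(1))$, where $\mu=-\rho'(1)/\rho(1)$ and $\sigma^2=-\rho''(1)/\rho(1)-\rho'(1)/\rho(1)+\bigl(\rho'(1)/\rho(1)\bigr)^2$; numerical evaluation of the singular system yields $\mu\simeq 0.14076941$ and $\sigma^2\simeq 0.063748151$. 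Passing from rooted to unrooted trees via the dissymmetry theorem only multiplies by analytic factors and preserves both the singularity type and the limit. Finally, a forest is a set of trees, so $F(x,u)=\exp\!\bigl(U(x,u)\bigr)$ (with $U$ the unrooted-tree series, or a multiset exponential in the unlabelled case); since $U$ is finite at its square-root singularity, $\exp(U)$ inherits the \emph{same} dominant singularity $\rho(u)$, which is precisely why the constants $\mu,\sigma^2$ coincide for $T_n$ and $F_n$, while the bounded isolated-vertex correction in the definition of $\beta$ for forests alters the statistic by $O(1)$ and so does not affect the normalized limit.

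The main obstacle is the combinatorial bookkeeping: a leg is a pendant path of unbounded length, so the specification must interleave a sequence construction with the branching recursion, and the nonlinear weight $(\ell_v-1)^+$ --- morally ``one free leg per major vertex'' --- must be tracked exactly, including the awkward legs that point toward the root. Once the functional equation is in place, checking the hypotheses of the quasi-powers theorem is routine apart from the variability condition $\sigma^2>0$ (equivalently, that $\log\rho(u)$ is not affine near $u=1$), which here is guaranteed by the strictly positive computed value $\sigma^2\simeq 0.0637$.
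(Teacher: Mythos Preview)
Your high-level strategy---symbolic method, implicit functional equation, Drmota--Lalley--Woods square-root singularity, Hwang's quasi-powers---is exactly the paper's, and the passage from trees to forests via an analytic transform of the tree series is handled the same way.

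The one substantive difference is the combinatorial encoding, and here the paper sidesteps precisely the ``main obstacle'' you flag. Instead of tracking the nonlinear weight $(\ell_v-1)^+$ with a catalytic variable, the paper first contracts every maximal degree-$2$ chain to a single edge, obtaining a \emph{special tree} with no degree-$2$ vertices and the same metric dimension; in a special tree, $\beta$ is simply (number of leaves) minus (number of internal vertices adjacent to a leaf). This linearizes the parameter: marking leaves with $u$ and internal vertices adjacent to a leaf with $v$, and then substituting $u=y$, $v=y^{-1}$, makes the exponent of $y$ equal to $\beta$ automatically, with no catalytic variable and no $(\cdot)^+$. The rooted building block (``mobiles'') then satisfies a single clean implicit equation, the unrooted series comes from the dissymmetry relation $S=S_\bullet-\tfrac12 S_{\bullet\to\bullet}$, and general trees are recovered at the end by the substitution $x\mapsto x/(1-x)$ that re-inserts the degree-$2$ chains. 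Your approach would also go through, but the special-tree reduction plus the two-variable $u,v$ trick is what makes the system in the paper short and the verification of Theorem~\ref{eq:sq-single} routine; it is worth noting as an alternative to the catalytic bookkeeping you anticipate. One small correction: for forests the isolated-vertex adjustment is not literally $O(1)$ on the statistic before the argument, but rather is absorbed because the forest generating function (the paper's equation~\eqref{eq:forests}, which separates the isolated-vertex contribution explicitly) is an analytic transform of the tree series and hence inherits the same $\rho(y)$.
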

We also study random forests in the context of the Erd\H{o}s-R\'enyi model $G(n,p)$ for random graphs. Many results are also known in this context: Babai et al.~\cite{Babai80} showed that in $G(n,1/2)$ asymptotically almost surely the set of $\lceil (3\log n)/\log 2 \rceil$ vertices with the highest degrees can be used to test whether two random graphs are isomorphic (in fact they gave an algorithm with running time in $O(n^2)$), and hence they obtained an upper bound of $\lceil (3\log n)/\log 2 \rceil$ for the metric dimension of $G(n,1/2)$. Next, Frieze et al.~\cite{Frieze07} studied sets resembling resolving sets, namely \emph{identifying codes}: a set $C \subseteq V$ is an identifying code of $G$, if $C$ is a dominating set (every vertex $v \in V \setminus C$ has at least one neighbor in $C$) and $C$ is also a separating set (for all pairs $u,v \in V$, one must have $N[u] \cap C \neq N[v] \cap C$, where $N[u]$ denotes the closed neighborhood of $u$). Observe that a graph might not have an identifying code, but note also that for random graphs with diameter $2$ the concepts are very similar. The existence of identifying codes and bounds on their sizes in $G(n,p)$ were established in~\cite{Frieze07}. The same problem in the model of random geometric graphs was analyzed by M\"uller and Sereni~\cite{Mueller09}, and Foucaud and Perarnau~\cite{Foucaud12} studied the same problem in random $d$-regular graphs. Finally, in a recent paper~\cite{BMP13} the authors studied the metric dimension of $G(n,p)$ for a wide range of values of $$(\log n)^5 \ll p(n-1) \leq n\left(1 - \frac{3 \log \log n}{\log n} \right).$$ In this last work the authors showed a zigzag-behavior of $\beta(G)$ in terms of the edge probability $p$.

The second contribution of this paper is the analysis of the metric dimension of sparse $G(n,p)$ with $p = \frac{c}{n}$ with $c < 1$. This range of parameters typically has a very forest-like structure, although a few cycles might be present. In such a situation the behavior is quite regular, and indeed we can obtain precise limiting distributions for this parameter. To make our result precise, we need the following notation. Let $F_n$ a distribution function of a certain random variable and let $\Phi$ denote the distribution function of the standard normal law. Define the following measure of convergence
$$
d(F_n,\Phi)=\sup_{h(x)} \frac{\left| \int h(x)dF_n(x) - \int h(x) d\Phi(x) \right|}{|| h ||},
$$
where $||h||=\sup_x |h(x)|+\sup_x |h'(x)|,$   and the supremum is taken over all bounded test functions $h$ with bounded derivative. For a random variable $X$ denote by $\mathcal{L}(X)$ its distribution function (if it exists). We also say that a property holds \emph{asymptotically almost surely}, or \emph{a.a.s.}, if the probability for it to hold tends to $1$ as $n \to \infty$.


\begin{theorem}\label{thm:main-Gnp} Let $G \in G(n,p)$.
\item[(i)] For $p=o\left(n^{-1}\right)$, $\beta(G)=n(1+o(1))$ a.a.s.
\item[(ii)] For $p=\frac{c}{n}$ with $0 < c < 1$, the sequence of random variables $$\frac{\beta(G) - \E{\beta(G)}}{\sqrt{\V{\beta(G)}}}$$ converges in distribution to a standard normal distribution as $n\rightarrow \infty$, and
$$ d\left( \mathcal{L}\left( \frac{\beta(G) - \E{\beta(G)}}{\sqrt{\V{\beta(G)}}}\right), \Phi \right) = O\left(n^{-1/2}\right).$$
Moreover, $\E{\beta(G)}=Cn(1+o(1))$, where
\begin{equation}\label{eq:C}
C=e^{-c}\left( \frac32 +c+\frac{c^2}{2} - e^c-\frac12 e^{ce^{-c}}+exp\left(c \frac{1-(c+1)e^{-c}}{1-ce^{-c}}\right) - c \frac{e^{-c}}{1-ce^{-c}} - \frac{c^2}{2}\left(\frac{1-(c+1)e^{-c}}{1-ce^{-c}}\right)^2 \right),
\end{equation}
and $\V{\beta(G)}=\Theta(n)$.
\end{theorem}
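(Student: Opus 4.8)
The plan is to exploit the component structure of subcritical $G(n,p)$ together with the tree characterisations of $\beta$ recalled in the introduction, and to establish asymptotic normality by Stein's method after a truncation that makes the relevant dependencies local. For part (i), when $p=o(n^{-1})$ the expected number of edges is $\binom{n}{2}p=o(n)$, so a.a.s. at most $o(n)$ vertices are non-isolated; since every component satisfies $\beta\ge 1$ and the whole graph carries the isolated-vertex correction $-1$, one gets $\#\{\text{isolated}\}-1\le \beta(G)\le n-1$, and as $\#\{\text{isolated}\}=n-o(n)$ a.a.s., this yields $\beta(G)=n(1+o(1))$. For part (ii) I would first reduce to trees: writing $\beta(G)=\sum_{\mathcal C}\beta(\mathcal C)-\mathbf{1}[\exists\text{ isolated vertex}]$ and using that for $c<1$ the expected number of vertices lying in components that contain a cycle is $O(1)$ (the expected number of unicyclic components is bounded and complex components a.a.s. do not occur), the contribution of non-tree components to $\beta$ is $O_p(1)$, hence negligible both for the mean (an $o(n)$ term) and, after standardising by $\sqrt{\V{\beta(G)}}=\Theta(\sqrt n)$, for the limit law and even for the $O(n^{-1/2})$ rate in $d$.

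Restricting to tree components, I would turn the Slater--Khuller formula into a sum of local contributions: writing $\beta(T)=\sum_{v:\ell_v>1}(\ell_v-1)$ for non-path trees and $\beta=1$ for path components, one obtains $\sum_{\text{tree }\mathcal C}\beta(\mathcal C)=\sum_v \xi_v$ up to an $O(1)$ correction, where $\xi_v$ is a bounded functional of the rooted structure at $v$ (its degree and which incident branches are pendant paths). The mean $\E{\beta(G)}=Cn(1+o(1))$ then follows either from the local weak limit of $G(n,c/n)$ (the Poisson$(c)$ Galton--Watson tree) or, equivalently, from the expected component counts: a tree shape $H$ on $k$ vertices occurs as a component with density $c^{k-1}e^{-ck}/|\mathrm{Aut}(H)|$ per vertex, and summing $\beta(H)$ against these weights by means of the tree generating function produces a closed form. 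The series $q=e^{-c}/(1-ce^{-c})$, the probability that a branch is a pendant path, and its complement $1-q=(1-(c+1)e^{-c})/(1-ce^{-c})$ are exactly the quantities appearing in~(\ref{eq:C}), so the bookkeeping over the small component types (isolated vertices, edges, and the leg structure at branch vertices) reproduces the stated constant.

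For $\V{\beta(G)}=\Theta(n)$, the upper bound is immediate from the bounded and weakly dependent local contributions, while for the lower bound I would exhibit a positive-density family of disjoint local configurations each of which flips $\beta$ by $\Theta(1)$ independently, so that the conditional variance given the remaining edges is $\Omega(n)$. The heart of the proof is the Berry--Esseen statement in the metric $d$, which I would obtain by Stein's method for sums with local dependence. After truncating each $\xi_v$ to $\xi_v^{(t)}$, a function only of the radius-$t$ ball around $v$, two truncated contributions are independent unless their balls meet, and for $c<1$ the expected size of such a dependency neighbourhood is $O(1)$ with exponential tails; a dependency-neighbourhood normal-approximation bound then gives $d(\mathcal L(\widehat W^{(t)}),\Phi)=O(n^{-1/2})$ for the standardised truncated sum $\widehat W^{(t)}$, and the truncation error is controlled in $d$ by the subcritical tail estimate $\Prob[|\mathcal C(v)|\ge k]\le e^{-\gamma k}$ on component sizes.

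The main obstacle is precisely this last step. The contribution of a vertex depends on its entire component, whose size can reach $\Theta(\log n)$, so the dependency neighbourhoods are data-dependent and unbounded, and no off-the-shelf bounded-local-dependence central limit theorem applies directly. Making the truncation rigorous, namely choosing $t$, quantifying the truncation error in the $d$-metric rather than merely in probability, and verifying that the moment and neighbourhood-size inputs to the Stein bound are \emph{uniformly} controlled through the exponential tails, is where essentially all of the technical work concentrates, and it is what upgrades bare convergence in distribution to the explicit $O(n^{-1/2})$ rate.
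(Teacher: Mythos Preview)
Your overall strategy is sound and matches the paper in spirit, but you have manufactured a difficulty in the CLT step that the paper sidesteps entirely. You propose to truncate each local contribution to a radius-$t$ ball, apply a bounded-local-dependence Stein bound, and then control the truncation error; you correctly identify this as the technical crux and worry that ``no off-the-shelf bounded-local-dependence central limit theorem applies directly.'' The paper does not truncate at all. Instead it writes $W=\sum_v X_v$ with $X_v=(Y_v-\E{Y_v})/\sqrt{\V{\beta(G)}}$, where $Y_v$ is the indicator that $v$ lies in a fixed minimum resolving set, and takes the dependency neighbourhood $K_i$ to be the \emph{entire connected component} of $i$. Since distinct components of $G(n,p)$ are independent, $W_i=\sum_{k\notin K_i}X_k$ is independent of $(X_i,Z_i)$, which is exactly the hypothesis in the Barbour--Karo\'nski--Ruci\'nski version of Stein's method (Theorem~\ref{thm:Stein}). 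Each summand in the $\varepsilon$ of~\eqref{eq:epsilon} is bounded by $(\V{\beta(G)})^{-3/2}$ because $|Y_v-\E{Y_v}|\le 1$, and the number of triples $(i,k,\ell)$ with $k,\ell\in K_i$ is $\sum_i |K_i|^2$, whose expectation is $O(n)$ by the exponential tail on subcritical component sizes. Hence $\varepsilon=O\bigl(n/(\V{\beta(G)})^{3/2}\bigr)=O(n^{-1/2})$ directly, with no truncation and no separate control of a remainder in the $d$-metric. Your ``main obstacle'' therefore dissolves once you let $K_i$ be the whole (random, possibly $\Theta(\log n)$-sized) component rather than a fixed-radius ball.

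On the mean, your route via the local weak limit (Poisson$(c)$ Galton--Watson tree) or via the tree generating functions is a legitimate and arguably cleaner alternative to what the paper does: the paper instead performs a direct first-moment calculation, introducing the notions of \emph{thin} vertices (leaves and degree-$2$ vertices lying on pendant paths) and \emph{important} vertices (the set $K$ in Slater's characterisation), and computes $\Prob(v\text{ thin})=e^{-c}/(1-ce^{-c})$ by solving a one-step recursion, then sums over degrees. Both approaches yield the same constant~$C$. For the variance, the paper's lower bound is exactly your idea (a linear number of small tree components with non-degenerate $\beta$), while the upper bound is obtained by an explicit second-moment computation showing each cross-term factors up to $O(1/n)$, rather than by an abstract ``bounded local contributions'' argument.
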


\textbf{Comparison of the two models.} The plot of the constant term $C$ given by~\eqref{eq:C} as a function of $c$ is shown in Figure \ref{fig:grafic}.
It is interesting to notice that the constant term in the expectation in this latter model is (much) bigger than the constant obtained in Theorem~\ref{thm:main-tree}.
This shows that these two models are qualitatively different.
A possible explanation for this is the following: the second model generates many small trees, for which, relatively to the number of  vertices in the whole graph, a bigger subset is needed to distinguish all vertices (for example, for isolated vertices all of them except one has to be taken, for trees of size 2 and 3 one vertex has to be taken, and in general, the bigger the number of vertices of a tree, the smaller the proportion of vertices that has to be chosen).
Unfortunately, we are not able to calculate the leading constant of the variance in the second model, and thus we cannot compare the two variances.
\begin{figure}[htb]
\begin{center}
\includegraphics[width=12.6cm]{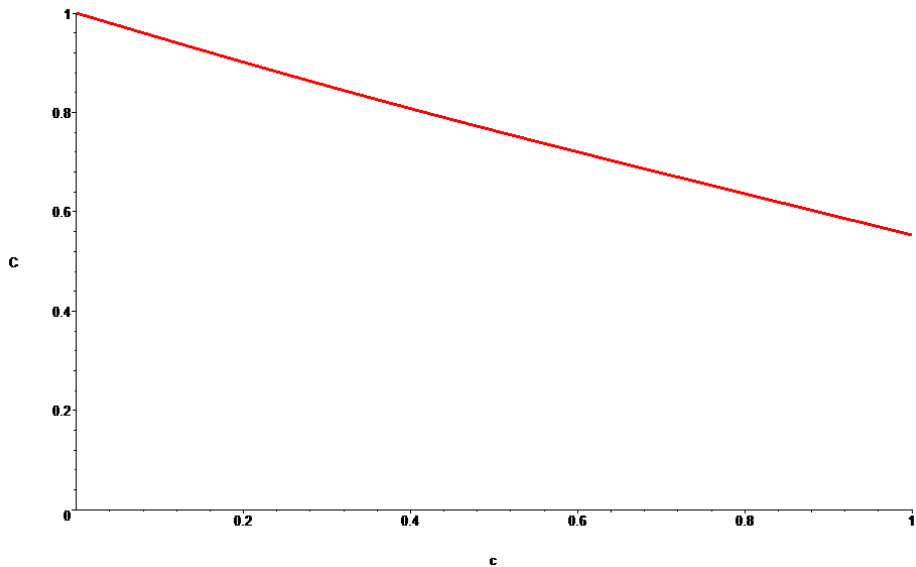}
\end{center}
\caption{The constant term in the mean of the metric dimension when $c$ moves from 0 to $1$. For $c$ approaching $1$, we have $C \simeq 0.55339767$.}\label{fig:grafic}
\end{figure}

The proofs of both results of this paper are based on Slater's characterization of the metric dimension for trees. In Theorem~\ref{thm:main-tree} we use the methodology of the \emph{Analytic Combinatorics} domain (see \cite{FS}), whereas in Theorem~\ref{thm:main-Gnp} we compute first and second moments and use \emph{Stein's Method} to deduce the limiting distribution.
\\\\
\textbf{Organization of the paper.} In Section~\ref{sec:prel} we describe all necessary preliminaries for the proofs of both models. Section~\ref{sec:ran-trees} is then entirely devoted to the proof of Theorem~\ref{thm:main-tree}, and Section~\ref{sec:G(n,p)} deals with the proof of Theorem~\ref{thm:main-Gnp}.

\section{Preliminaries} \label{sec:prel}

In this section we introduce all the techniques we use in the paper, namely the Symbolic Method in Analytic Combinatorics, the results needed for deriving normal limiting distributions in both models, a simple version of Stein's Method and two simple well-known facts about $G(n,p)$. \\
\paragraph{\textbf{The Symbolic Method}} The reference book for all this analysis is~\cite{FS}. All graphs considered in this paper are labelled, namely vertices carry distinguishable labels (for a graph $G$ with $n$ vertices, we may assume that the labels belong to $[n]$). Let $\mA$ be a set of labelled objects, and let $|\cdot|$ be a function from $\mA$ to $\mathbb{N}$. If $a\in \mA$, we say that $|a|$ is the \emph{size} of $a$. A pair $(\mA,|\cdot|)$ is called a \emph{combinatorial class}. We only consider combinatorial classes where the number of elements with a prescribed size is finite. Under this assumption, we define the formal power series $A(x)=\sum_{a\in \mA}\frac{x^{|a|}}{|a|!}=\sum_{n=0}^\infty a_n \frac{x^n}{n!}$, and conversely, $[x^n] A(x)= \frac{a_n}{n!}$. We say that $A(x)$ is the \emph{exponential generating function} associated to the combinatorial class $(\mA,|\cdot|)$. The factorial is used in order to deal with the labels of the combinatorial class.

The \emph{union} $\mA \cup \mB$ of two classes $\mA$ and $\mB$ refers to the disjoint union of the classes (and the corresponding induced size). The \emph{labelled Cartesian product} $\mA \times \mB$ of two classes $\mA$ and $\mB$ is the set of pairs $(a,b)$ where $a\in \mA$ and $b\in \mB$, joint with a redistribution of the labels of both $a$ and $b$. The size of $(a,b)$ is the sum of the sizes of $a$ and $b$. The \emph{sequence} of a set $\mA$ (denoted by $\Seq{\mA}$) is $\{\varepsilon\}\cup\mA \cup (\mA\times\mA) \cup (\mA\times\mA\times \mA) \cup \dots$ ($\varepsilon$ denotes an element in the class of size $0$). The \emph{set} construction $\Set{\mA}$ is $\Seq{\mA}/\sim$, where $(a_1,a_2,\dots,a_r)\sim (\widehat{a}_1,\widehat{a}_2,\dots,\widehat{a}_r)$ when
there exists a permutation of indices $\tau$ in $\{1,\dots,r\}$ such that equality $a_i=\widehat{a}_{\tau(i)}$ holds for all $i$. The \emph{restricted set} construction is equivalent to the previous one but when the Cartesian product has only a fixed number of terms.  Finally, the \emph{composition} of two combinatorial classes $\mA$ and $\mB$ is obtained by substituting each atom of each element of $\mA$ by an element of $\mB$. All these constructions are resumed in Table~\ref{table:symbolic}.
\begin{table}[htb]
\begin{center}
\begin{tabular}{c c|c}
  Construction &  & Generating function \\\hline
  Union & $\mA\cup\mB$ & $A(x)+B(x)$ \\
  Product & $\mA\times\mB$ & $A(x)\cdot B(x)$ \\
  Sequence & $\Seq{\mA}$ & $\left(1-A(x)\right)^{-1}$ \\
  Restricted Set & $\Setd{\mA}$ & $\frac{1}{d!} A(x)^d$ \\
 Set & $\Set{\mA}$ & $\exp(A(x))$ \\
 Composition & $\mA \circ \mB$ & $A(B(x))$ \\
\end{tabular}
\caption{The Symbolic Method. In the table, GFs
associated to classes $\mA$ and $\mB$ are $A(x)$ and $B(x)$,
respectively.}\label{table:symbolic}
\end{center}
\end{table}

The framework of analytic combinatorics is also powerful to handle probabilities in a combinatorial class.
Consider a certain parameter $\chi:\mathcal{A}  \to \mathbb{N}$ on~$\mathcal{A}$.
For $n,m \in \mathbb{N}$, denote by $a_{n,m}$ the number of objects of~$\mathcal{A}$ of size $n$ and parameter $\chi$ equals to $m$.
Define the bivariate generating function
$$
A(x,y) = \sum_{n,m \in \N} \frac{1}{n!} a_{n,m} \, x^n \, y^m,
$$
where $y$ marks the parameter $\chi$.
Observe that $A(x,1)=A(x)$.
For each value of $n$, the parameter $\chi$ defines a random variable $X_{n}$ over elements of $\mathcal{A}$ of size $n$ with discrete probability density function $\mathbb{P}\left(X_n=m\right)=a_{m,n}/{a_n}$.
Hence, this discrete probability distribution can be encapsulated by means of the following expression:
$$
p_n(y) = \frac{[x^n]A(x,y)}{[x^n]A(x,1)}.
$$
The first main theorem of this paper is based on the analysis of such probability distributions.\\
%
%
%
%
%


\paragraph{\textbf{Singularity analysis on bivariate counting formulas}}

By means of complex analytic techniques, it is frequent to obtain functional equations on counting formulas from which we want to extract asymptotic estimates of the coefficients.
Different inversion techniques can be useful for that purpose.
In our work we need to analyze implicit schemes of the form
$$T(x,y) = F(x,y,T(x,y)),$$
for certain analytic functions~$F(x,y,z)$. Under natural conditions on $F$, we can obtain the singular expansion of $T(x,y)$ around its smallest singularity. We rephrase Theorem 2.21 from~\cite{Drmota-trees} (based on the earlier works~\cite{Drmota1, Drmota2, Drmota3}) in a simplified version:
\begin{theorem}[Square-root singularity for implicit equations]\label{eq:sq-single}
Let $F(x,y,z)$ an analytic function around the origin, such that all Taylor coefficients are non-negative, $F(0,y,z)$ is identically equal to the zero function and $F(x,y,0)\neq 0$. Assume that in the region of convergence of $F(x,y,z)$ the system of equations
\begin{equation}\label{eq:syst-fonamental}
z=F(x,1,z),\, 1=\frac{\partial}{\partial z} F(x, 1, z)
\end{equation}
has a non-negative solution $(x,z)=(\rho, \tau)$ such that $\frac{\partial}{\partial x} F(\rho, 1, \tau) \neq 0$ and $\frac{\partial^2}{\partial y^2}F(\rho,1,\tau)\neq 0$. Assume that the counting formula $T(x,y)$ is defined by the implicit scheme $T(x,y) = F(x,y,T(x,y))$. Then, $T(x,y)$ is an analytic function around the origin, with non-negative Taylor coefficients. Additionally, there exist functions $f(y)$, $g(y)$, $h(y)$, $q(y)$ and $\rho(y)$ which are analytic around $x = \rho=\rho(1)$, $y = 1$ such that $T(x,y)$ is analytic for $|x| < \rho$ and $|y-1|<\varepsilon$ (for some $\varepsilon > 0$), and has an expansion of the form
$$
T(x,y) = f(y) + g(y) \left(1-\frac{x}{\rho(y)}\right)^{1/2}+ h(y)\left(1-\frac{x}{\rho(y)}\right)+q(y)\left(1-\frac{x}{\rho(y)}\right)^{3/2}  +O\left(\left(1-\frac{x}{\rho(y)}\right)^2\right),
$$
locally around $x=\rho(y)$.
\end{theorem}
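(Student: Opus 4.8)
The plan is to reduce the bivariate statement to the classical \emph{smooth implicit-function schema} for a single variable and then to track the analytic dependence on the secondary variable $y$. Write $\Psi(x,y,z) = z - F(x,y,z)$, so that the functional equation reads $\Psi(x,y,T(x,y)) = 0$. Since $F$ has non-negative Taylor coefficients with $F(0,y,z)\equiv 0$ and $F(x,y,0)\neq 0$, the series $T(x,y)$ is determined as the unique solution with $T(0,y)=0$ and non-negative coefficients, and the implicit function theorem gives analyticity of $T$ near the origin as long as $\partial_z\Psi = 1-\partial_z F\neq 0$, i.e. everywhere before the first point where $\partial_z F$ reaches the value $1$.

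First I would fix $y=1$. The point $(\rho,\tau)$ from \eqref{eq:syst-fonamental} is exactly where the implicit function theorem fails: there $\Psi(\rho,1,\tau)=0$ and $\partial_z\Psi(\rho,1,\tau)=1-\partial_z F(\rho,1,\tau)=0$. I would expand $\Psi$ to second order about this point,
$$\Psi(x,1,z) = -\partial_x F(\rho,1,\tau)\,(x-\rho) - \tfrac12 \partial_{zz}F(\rho,1,\tau)\,(z-\tau)^2 + \cdots,$$
where the linear term in $(z-\tau)$ vanishes by the second equation of \eqref{eq:syst-fonamental}. Here $\partial_x F(\rho,1,\tau)\neq 0$ is assumed, while $\partial_{zz}F(\rho,1,\tau)>0$ follows from positivity of the coefficients (as $F$ is genuinely nonlinear in $z$). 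Solving this quadratic tangency by the Newton--Puiseux method and selecting the branch for which $T$ stays real, bounded and increasing as $x\uparrow\rho$ yields the square-root singularity $T(x,1) = \tau + g(1)\sqrt{1-x/\rho}+\cdots$ with $g(1)=-\sqrt{2\rho\,\partial_x F/\partial_{zz}F}$; iterating the Newton--Puiseux expansion then produces the full series in half-integer powers of $(1-x/\rho)$ and identifies $f(1),g(1),h(1),q(1)$. To finish the univariate case one must check that $\rho$ is the unique singularity of smallest modulus and that $T$ continues analytically to a $\Delta$-domain, which is where the aperiodicity implicit in the positivity hypotheses enters, exactly as in the smooth implicit-function schema of \cite{FS}.

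Next I would let $y$ range over a neighbourhood of $1$. Consider the map $(x,z)\mapsto\bigl(\Psi(x,y,z),\,\partial_z\Psi(x,y,z)\bigr)$ and apply the implicit function theorem at $(\rho,1,\tau)$: its Jacobian in $(x,z)$ equals $\partial_x F\cdot\partial_{zz}F$, nonzero by the two nondegeneracy facts above. Hence the solution $(\rho(y),\tau(y))$ of \eqref{eq:syst-fonamental} exists and is analytic for $|y-1|<\varepsilon$. Repeating the local quadratic analysis with $y$ as an analytic parameter, the constants in the singular expansion become analytic functions of $y$, namely $f(y)=\tau(y)$, $g(y)=-\sqrt{2\rho(y)\,\partial_x F/\partial_{zz}F}$ evaluated along $(\rho(y),y,\tau(y))$, and similarly for $h(y),q(y)$, each inheriting analyticity from $\rho(y),\tau(y)$ and from $F$. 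I note that the hypothesis $\partial_{yy}F(\rho,1,\tau)\neq 0$ is not needed for the expansion itself; it is reserved for the subsequent limit-law analysis, where it guarantees a nonzero asymptotic variance.

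The main obstacle is not the local quadratic computation but the global, uniform control: one must show that for all $y$ near $1$ the singularity $\rho(y)$ remains the unique dominant one and that the square-root expansion holds uniformly on a common $\Delta$-domain, so that singularity analysis can later be applied with error terms uniform in $y$. This uniform analytic continuation, together with the fact that the perturbed problem inherits aperiodicity from the case $y=1$, is the delicate part; once it is in place, the explicit expansion and the analyticity of $f,g,h,q,\rho$ follow by matching coefficients. Since the statement only rephrases Theorem~2.21 of \cite{Drmota-trees}, I would ultimately appeal to that reference for the uniformity, having isolated above the three structural reasons that make the scheme work: the failure locus of the implicit function theorem, the quadratic tangency of $\Psi$, and the analytic perturbation in $y$.
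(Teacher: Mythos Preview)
The paper does not prove this theorem at all: it is stated in the preliminaries as a simplified rephrasing of Theorem~2.21 of \cite{Drmota-trees} (building on \cite{Drmota1,Drmota2,Drmota3}) and is invoked as a black box. Your sketch is the standard argument behind that reference---Weierstrass preparation/Newton--Puiseux at the quadratic tangency of $\Psi=z-F$, followed by analytic perturbation in $y$ via the implicit function theorem applied to the pair $(\Psi,\partial_z\Psi)$---and you correctly flag that the substantive work lies in the uniform $\Delta$-domain continuation, which you defer to \cite{Drmota-trees} exactly as the paper does. So your proposal is consistent with, and in fact more detailed than, what the paper provides.
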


Once we know the singular behavior of a bivariate generating function, we can study, by means of general results, the limiting distribution of the  parameter we are codifying. In this context, the \emph{Quasi-powers Theorem}~\cite{Hwang} gives sufficient conditions to assure normal limiting distributions. In the following simplified version we adapt the hypothesis to the expansions we will find in the analysis:
\begin{theorem}[Quasi-Powers Theorem~\cite{Hwang}]
\label{theo:quasi-powers}
Let $F(x,y)$ be a bivariate analytic function on a neighborhood of $(0,0)$, with non-negative coefficients.
Assume that the function $F(x,y)$ admits, in a region
\[
\mathcal{R}=\{|y-1|<\varepsilon\}\times\{|x|\leq r\}
\]
for some $r,\varepsilon > 0$, a representation of the form
\[
F(x,y) = A(x,y) + B(x,y) \, C(x,y)^{-\alpha},
\]
where $A(x,y)$, $B(x,y)$ and $C(x,y)$ are analytic in $\mathcal{R}$, and such that
\begin{itemize}
\item $C(x,y) = 0$ has a unique simple root $\rho<r$ in $|x|\leq r$,
\item $B(\rho,y) \neq 0$,
\item neither $\partial_x C(\rho,y)$ nor $\partial_y C(\rho,y)$ vanish, so there exists a non-constant function $\rho(y)$ analytic at $y = 1$ such that $\rho(1)=\rho$ and $C(\rho(y),y) = 0$,
\item finally
\[
\sigma^2 = -\frac{\rho''(1)}{\rho(1)}-\frac{\rho'(1)}{\rho(1)}+\left(\frac{\rho'(1)}{\rho(1)}\right)^2
\]
is different from $0$.
\end{itemize}
Then the sequence of random variables with density probability function
\[
p_n(y)=\frac{[x^n]{F(x,y)}}{[x^n]{F(x,1)}}
\]
converges in distribution to a normal distribution.
The corresponding expectation $\mu_n$ and variance $\sigma_n^2$ converge asymptotically to $-\frac{\rho'(1)}{\rho(1)} \, n$ and~$\sigma^2 n$, respectively.
\end{theorem}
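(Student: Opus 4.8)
The plan is to reduce the statement to a \emph{quasi-power} estimate for the probability generating functions $p_n(y)$ and then invoke the continuity theorem for the limiting Gaussian law. The argument splits naturally into a singularity-analysis step, producing the asymptotics of $[x^n]F(x,y)$ uniformly in $y$, followed by a cumulant-extraction step matching the stated constants. Throughout I would treat $y$ as a complex parameter ranging over a small disk around $1$, and let $X_n$ denote the random variable whose generating function is $p_n(y)$.

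First I would fix $y$ and analyze the singular behaviour of $x \mapsto F(x,y)$. Since $C(x,y)=0$ has a unique simple root $\rho<r$ at $y=1$ with $\partial_x C(\rho,1)\neq 0$, the implicit function theorem yields an analytic branch $\rho(y)$ with $\rho(1)=\rho$ and $C(\rho(y),y)=0$; because $\partial_y C(\rho,1)\neq 0$ this branch is non-constant. Near $x=\rho(y)$ the simple-root factorization gives $C(x,y) = -\rho(y)\,\partial_x C(\rho(y),y)\left(1-\tfrac{x}{\rho(y)}\right)\bigl(1+O(1-\tfrac{x}{\rho(y)})\bigr)$, so that, using $B(\rho(y),y)\neq 0$, the term $B\,C^{-\alpha}$ contributes a singularity of type $(1-x/\rho(y))^{-\alpha}$ while $A(x,y)$ remains analytic at $x=\rho(y)$ and is therefore negligible. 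The transfer theorem of singularity analysis then delivers
$$[x^n]F(x,y) = \gamma(y)\,\rho(y)^{-n}\,\frac{n^{\alpha-1}}{\Gamma(\alpha)}\,(1+o(1)),$$
where $\gamma(y)=B(\rho(y),y)\bigl(-\rho(y)\partial_x C(\rho(y),y)\bigr)^{-\alpha}$ is analytic and non-vanishing near $y=1$.

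The crucial point, and the one I expect to be the main obstacle, is that this estimate must hold \emph{uniformly} for $y$ in a complex neighbourhood of $1$. I would argue that since $\rho(y)$ depends analytically on $y$ and is the unique root of smallest modulus at $y=1$, by continuity it stays the unique dominant singularity, and remains simple, on a sufficiently small disk; this permits fixing a common $\Delta$-domain and applying a uniform (two-parameter) version of the transfer theorem. Granting uniformity, dividing the estimates yields the quasi-power form
$$p_n(y) = \frac{[x^n]F(x,y)}{[x^n]F(x,1)} = \frac{\gamma(y)}{\gamma(1)}\left(\frac{\rho(1)}{\rho(y)}\right)^{n}\bigl(1+o(1)\bigr)$$
uniformly near $y=1$.

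Finally I would substitute $y=e^{s}$ and read off the cumulants. Writing $U(s)=\log\bigl(\rho(1)/\rho(e^{s})\bigr)$ gives $\log p_n(e^s) = n\,U(s) + \log(\gamma(e^s)/\gamma(1)) + o(1)$, so by the continuity theorem the normalized variable $(X_n - \mu_n)/\sigma_n$, with $\mu_n = n\,U'(0)$ and $\sigma_n^2 = n\,U''(0)$, converges to $N(0,1)$ provided $U''(0)\neq 0$. A direct differentiation gives $U'(0) = -\rho'(1)/\rho(1)$ and $U''(0) = -\rho''(1)/\rho(1) - \rho'(1)/\rho(1) + (\rho'(1)/\rho(1))^2 = \sigma^2$, which is precisely the quantity assumed nonzero in the hypotheses; this confirms both the mean $-\tfrac{\rho'(1)}{\rho(1)}\,n$ and the variance $\sigma^2 n$. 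Equivalently, once the quasi-power behaviour above is established one may simply verify the hypotheses of Hwang's quasi-powers theorem and quote it directly.
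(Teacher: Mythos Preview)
The paper does not prove this theorem: it is stated in the preliminaries as a simplified version of Hwang's Quasi-Powers Theorem and merely cited from~\cite{Hwang}, so there is no ``paper's own proof'' to compare against.

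That said, your sketch is a faithful outline of the standard argument behind the result. The three stages you identify --- uniform singularity analysis yielding $[x^n]F(x,y)=\gamma(y)\rho(y)^{-n}n^{\alpha-1}/\Gamma(\alpha)\,(1+o(1))$, formation of the quasi-power ratio $p_n(y)=(\gamma(y)/\gamma(1))(\rho(1)/\rho(y))^n(1+o(1))$, and extraction of cumulants via $U(s)=\log(\rho(1)/\rho(e^s))$ --- are exactly the ingredients of Hwang's proof, and your computations of $U'(0)$ and $U''(0)$ are correct. You are also right to flag uniformity in $y$ as the delicate point: one needs a $\Delta$-domain that works simultaneously for all $y$ in a neighbourhood of $1$, and one needs the $o(1)$ error to be uniform there as well; this is precisely what the analyticity of $\rho(y)$ and the persistence of the simple-root structure guarantee, and it is the technical heart of the uniform transfer theorems in Flajolet--Sedgewick and Drmota. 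If you were to write this up in full, that is where the real work would go; the rest is routine.
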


\paragraph{\textbf{Stein's Method}}
We also make use of the following theorem, which is an adaptation of Stein's Method for the setting of random graphs (see~\cite{Barbour}):
\begin{theorem}\label{thm:Stein}(Theorem 1 of \cite{Barbour} and its following remarks):
Let $I$ be a finite subset of $\N$, and let $\{X_i\}_{i \in I}$ be a family of (possibly dependent) random variables of zero expectations, and such that $W=\sum_{i \in I} X_i$ has variance $1$. For each $i \in I$, let $K_i \subseteq I$, and define  $Z_i=\sum_{k \in K_i} X_k$ and $W_i=\sum_{k \notin K_i}X_k$ (so that $W=W_i+Z_i$).
Assume that for each $i \in I$, $W_i$ is independent of $X_i$ and $Z_i$, and that $X_i,W_i,Z_i$ have finite second moments. Define
\begin{equation}\label{eq:epsilon}
\varepsilon = 2 \sum_{i \in I} \sum_{k, \ell \in K_i} \left( \E{|X_i X_k X_{\ell}|}+\E{|X_iX_k|}\E{|X_{\ell}|} \right).
\end{equation}
Then, with $\Phi$ denoting the distribution function of the standard normal law,
$$
d(\mathcal{L}(W, \Phi) \leq K\varepsilon
$$
for some universal constant $K$. Hence, if $\{W^{(n)}\}$ is a sequence of random variables, such that each $W^{(n)}$ satisfies the conditions above, and such that the value $\varepsilon^{(n)}$ associated with $W^{(n)}$ converges to $0$ as $n \to \infty$, then $\{W^{(n)}\}$ converges to the standard normal law.
\end{theorem}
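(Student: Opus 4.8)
The plan is to run Stein's method for normal approximation, exploiting the local dependence encoded by the sets $K_i$. The starting point is the Stein equation: given a bounded test function $h$ with bounded derivative, let $Z$ be a standard normal variable and let $f=f_h$ solve
$$
f'(w) - w f(w) = h(w) - \E{h(Z)}.
$$
The explicit solution $f_h(w)=e^{w^2/2}\int_{-\infty}^{w}\big(h(t)-\E{h(Z)}\big)e^{-t^2/2}\,dt$ is classical, and the associated \emph{Stein factor} estimate gives $\|f_h''\|\leq 2\|h'\|\leq 2\|h\|$. Evaluating at $W$ and taking expectations turns the whole problem into estimating $\E{f'(W)-Wf(W)}$, since $\E{h(W)}-\E{h(Z)}=\E{f'(W)-Wf(W)}$; dividing by $\|h\|$ and taking the supremum over admissible $h$ then produces the claimed bound on $d(\mathcal{L}(W),\Phi)$.

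The key algebraic step uses the independence hypotheses. First I would record the normalization identity: since $\E{W}=0$ and $\V{W}=1$,
$$
1=\E{W^2}=\sum_{i\in I}\E{X_i W}=\sum_{i\in I}\big(\E{X_iW_i}+\E{X_iZ_i}\big)=\sum_{i\in I}\E{X_iZ_i},
$$
because $\E{X_iW_i}=\E{X_i}\,\E{W_i}=0$ (here $W_i$ is independent of $X_i$). Next, writing $W=W_i+Z_i$ and using $\E{X_if(W_i)}=\E{X_i}\,\E{f(W_i)}=0$ (again by independence), I obtain
$$
\E{Wf(W)}=\sum_{i\in I}\E{X_if(W)}=\sum_{i\in I}\E{X_i\big(f(W)-f(W_i)\big)}.
$$
A first-order Taylor expansion $f(W)-f(W_i)=Z_if'(W_i)+\tfrac12 Z_i^2 f''(\xi_i)$ with $\xi_i$ between $W_i$ and $W$, together with the factorization $\E{X_iZ_if'(W_i)}=\E{X_iZ_i}\,\E{f'(W_i)}$ (valid since $(X_i,Z_i)$ is independent of $W_i$), expresses $\E{Wf(W)}$ through the quantities $\E{X_iZ_i}$ plus a second-order remainder.

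Combining this with $\E{f'(W)}=\sum_i\E{X_iZ_i}\,\E{f'(W)}$ (the normalization identity), the two first-order contributions collapse into $\sum_i\E{X_iZ_i}\big(\E{f'(W)}-\E{f'(W_i)}\big)$, which the mean-value bound $|\E{f'(W)-f'(W_i)}|\leq\|f''\|\,\E{|Z_i|}$ controls. Expanding $Z_i=\sum_{k\in K_i}X_k$ and $Z_i^2=\sum_{k,\ell\in K_i}X_kX_\ell$ and applying the triangle inequality, the first-order piece is bounded by $\|f''\|\sum_i\sum_{k,\ell\in K_i}\E{|X_iX_k|}\,\E{|X_\ell|}$ and the remainder by $\tfrac12\|f''\|\sum_i\sum_{k,\ell\in K_i}\E{|X_iX_kX_\ell|}$; their sum is at most $\tfrac12\|f''\|\,\varepsilon$ with $\varepsilon$ as in~\eqref{eq:epsilon}. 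Inserting $\|f''\|\leq 2\|h\|$ finishes the estimate with the universal constant $K$ absorbing the numerical factors, and the final convergence statement follows at once once $\varepsilon^{(n)}\to 0$.

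The main obstacle is not the combinatorial bookkeeping, which is routine once the independence of $W_i$ from $(X_i,Z_i)$ has been used to kill the zeroth-order term $\E{X_if(W_i)}$ and to factor the first-order term; rather it is the sharp Stein factor estimate $\|f_h''\|\leq 2\|h'\|$, which requires a careful analysis of the solution $f_h$ of the Stein ODE and its first two derivatives. Since this is standard in the Stein's method literature and is precisely the content of the cited Theorem~1 of~\cite{Barbour}, I would quote it rather than reprove it, and spend the effort instead on verifying in each application that the structural hypotheses hold, namely zero means, unit variance of $W$, and the conditional independence $W_i$ from $(X_i,Z_i)$.
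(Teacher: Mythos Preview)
The paper does not prove this theorem at all: Theorem~\ref{thm:Stein} is quoted as a black box from Barbour--Karo\'nski--Ruci\'nski and stated in the Preliminaries as a tool, with no accompanying argument. So there is no ``paper's own proof'' to compare against.

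Your sketch is nonetheless a correct outline of the standard Stein's method argument underlying that cited result: the Stein equation, the identity $1=\sum_i\E{X_iZ_i}$ via independence of $W_i$ and $X_i$, the cancellation $\E{X_if(W_i)}=0$, the Taylor expansion of $f(W)-f(W_i)$ in powers of $Z_i$, the factorization $\E{X_iZ_if'(W_i)}=\E{X_iZ_i}\E{f'(W_i)}$, and the final bookkeeping that produces the two sums in~\eqref{eq:epsilon}. The only point where you wave your hands is exactly where you say you do, namely the Stein factor bound $\|f_h''\|\le 2\|h'\|$, and you rightly point to the literature for that. For the purposes of this paper, however, all of this is overkill: since the theorem is merely cited, the appropriate ``proof'' here is simply a reference, which is what the authors do.
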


\begin{remark}
If one considers the traditional Kolmogorov-Smirnov distance $\delta_n = \sup_x |F_n(x) - \Phi(x)|$ between a distribution function $F_n$ and the standard normal distribution, in general $\delta_n = O(\varepsilon^{1/2})$, and at the cost of greater effort in many cases also $\delta_n=O(\varepsilon)$, see~\cite{Barbour, Chen}.
\end{remark}
\paragraph{\textbf{Properties of the $G(n,p)$ model}}  We also make use of the following two facts about random graphs $G(n,p)$ with $p=\frac{c}{n}$ and $0 < c < 1$.
\begin{lemma}(Corollary 5.11 of~\cite{bol})\label{lem:smallcomp}
 Let $G \in G(n,p)$ with $p=\frac{c}{n}$ and $0 < c < 1$. Then, there exists some $C > 0$ such that with probability at least, say, $1-n^{-5}$, all connected components of $G$ have size at most $C \log n.$
\end{lemma}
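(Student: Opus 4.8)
The plan is to prove the statement by a union bound over vertices, reducing it to a single-vertex tail estimate. Writing $\mathcal{C}(v)$ for the connected component containing a fixed vertex $v$, it suffices to establish a geometric bound $\Prob\left(|\mathcal{C}(v)| \geq k\right) \leq \gamma^{k-1}$ for some constant $\gamma = \gamma(c) \in (0,1)$ and every $k$, since then $\Prob\left(\exists\, v : |\mathcal{C}(v)| \geq k\right) \leq n\,\gamma^{k-1}$ by the union bound over the $n$ choices of $v$. Setting $k = C\log n$ turns the right-hand side into $\gamma^{-1} n^{1+C\log\gamma}$, which is at most $n^{-5}$ as soon as $C \geq 6/|\log\gamma|$; this is exactly the $C\log n$ bound claimed. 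So the whole proof hinges on producing $\gamma < 1$.

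To obtain the geometric tail I would analyze $\mathcal{C}(v)$ through the standard breadth-first exploration process. One maintains a partition of the vertices into \emph{explored}, \emph{active}, and \emph{neutral} sets, starting with $v$ active and everything else neutral; at each step one removes an active vertex, reveals its edges to the current neutral vertices (each present independently with probability $p$), moves the newly-hit neutral vertices to active, and marks the processed vertex explored. Letting $Y_s$ be the number of vertices newly activated at step $s$ and $A_t = 1 + \sum_{s=1}^{t}(Y_s - 1)$ the number of active vertices after $t$ steps, the component has size $|\mathcal{C}(v)| = \min\{t : A_t = 0\}$. Hence the event $|\mathcal{C}(v)| \geq k$ forces $A_{k-1}\geq 1$, i.e. $\sum_{s=1}^{k-1} Y_s \geq k-1$. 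Each increment $Y_s$ is stochastically dominated by a $\mathrm{Bin}(n,p)$ variable of mean $np = c$, and these dominating variables can be taken independent, so $\Prob\left(|\mathcal{C}(v)| \geq k\right) \leq \Prob\left(\mathrm{Bin}\left(n(k-1),p\right) \geq k-1\right)$. A Chernoff bound for a binomial exceeding $1/c$ times its mean then gives $\Prob\left(|\mathcal{C}(v)| \geq k\right) \leq \left(c\,e^{1-c}\right)^{k-1}$.

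The crucial point is that $\gamma := c\,e^{1-c} < 1$ precisely when $c < 1$, which is where subcriticality enters. Indeed $f(c) = c\,e^{1-c}$ satisfies $f(1) = 1$ and $f'(c) = (1-c)e^{1-c} > 0$ on $(0,1)$, so $f$ is strictly increasing up to $f(1)=1$ and therefore $f(c) < 1$ for all $c \in (0,1)$. With $\gamma < 1$ fixed and $|\log\gamma| = c - 1 - \log c > 0$, the union bound from the first paragraph closes the argument with, say, $C = 6/(c - 1 - \log c)$, absorbing the harmless constant factor $\gamma^{-1}$ into the exponent for $n$ large.

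The main (and essentially the only) subtlety is the domination step: one must check that the increments $Y_s$ are genuinely bounded above by independent $\mathrm{Bin}(n,p)$ variables. This holds because at every step the edges revealed point to a subset of the still-neutral vertices and are examined for the first time, so their states are independent of everything exposed previously; the number of such vertices is at most $n$, giving the $\mathrm{Bin}(n,p)$ upper bound. Beyond this the computation is routine Chernoff estimation, and the explicit decay base $c\,e^{1-c}$ is robust enough that even this crude bound (which ignores the shrinking neutral set) already suffices, so no finer control of the exploration is required.
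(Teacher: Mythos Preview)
The paper does not give its own proof of this lemma: it is stated as a direct citation of Corollary~5.11 in Bollob\'as' \emph{Random Graphs} and used as a black box throughout Section~\ref{sec:G(n,p)}. Your proposal therefore goes beyond what the paper does by supplying a self-contained argument, and the argument you give is correct and standard: the breadth-first exploration of $\mathcal{C}(v)$ with increments dominated by independent $\mathrm{Bin}(n,p)$ variables, the observation that $|\mathcal{C}(v)|\geq k$ forces $\sum_{s\leq k-1}Y_s\geq k-1$, and the Chernoff bound yielding the explicit decay base $\gamma=ce^{1-c}<1$ for $c<1$, followed by a union bound over vertices. The only point worth a second glance is the domination step, and you handle it properly by noting that edges to neutral vertices are revealed fresh at each step; the crude $\mathrm{Bin}(n,p)$ upper bound (ignoring depletion of the neutral set) is indeed enough here.
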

\begin{lemma}(Theorem 5.7 of~\cite{bol})\label{lem:notrees}
 Let $G \in G(n,p)$ with $p=\frac{c}{n}$ and $0 < c < 1$ and denote by $Z$ the random variable counting vertices not belonging to trees in $G$. Then $\E{Z}=O(1)$.
\end{lemma}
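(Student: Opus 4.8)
The plan is to estimate $\E{Z}$ directly by a first-moment computation over the non-tree components, exploiting the fact that in the subcritical regime such components are rare and small. Write $\E{Z} = \sum_{k \ge 3} k \cdot \mu_k$, where $\mu_k$ is the expected number of connected components of size $k$ that are not trees. If $c_{k,m}$ denotes the number of connected labelled graphs on a fixed set of $k$ vertices with exactly $m$ edges, then choosing the vertex set, prescribing the internal edges, and forbidding all edges to the remaining $n-k$ vertices gives
$$
\mu_k = \sum_{m \ge k}\binom{n}{k} c_{k,m}\, p^{m}(1-p)^{\binom{k}{2}-m+k(n-k)}.
$$
A tree on $k$ vertices has $m=k-1$ edges, so a non-tree component corresponds to $m \ge k$, i.e.\ to positive excess $j := m-(k-1) \ge 1$. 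First I would dispose of the large components: by Lemma~\ref{lem:smallcomp}, with probability at least $1-n^{-5}$ every component has size at most $C\log n$, and on the complementary event $Z \le n$ trivially; hence the contribution of $k > C\log n$ to $\E{Z}$ is at most $n\cdot n^{-5}=o(1)$, and it suffices to sum over $3 \le k \le C\log n$. In this range the estimates $\binom{n}{k}=\frac{n^k}{k!}(1+o(1))$, $(1-p)^{\binom k2 - m}=1+o(1)$ and $(1-p)^{k(n-k)}=e^{-ck}(1+o(1))$ hold uniformly.

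The dominant non-tree contribution comes from the unicyclic components (excess $j=1$, $m=k$). Writing $u_k=c_{k,k}$ for the number of connected unicyclic labelled graphs on $k$ vertices, the above estimates give
$$
\sum_{k\ge 3} k\,\binom{n}{k}u_k\, p^{k}(1-p)^{\binom k2 -k + k(n-k)} = (1+o(1))\sum_{k\ge 3} k\,\frac{u_k}{k!}\,\widehat{c}^{\,k}, \qquad \widehat{c}:=c\,e^{-c}.
$$
Now I would recognise $\sum_k \frac{u_k}{k!}x^k = U(x)$ as the exponential generating function of connected unicyclic graphs, with the classical closed form $U(x)=\tfrac12\bigl(\ln\tfrac{1}{1-T(x)}-T(x)-\tfrac12 T(x)^2\bigr)$, where $T(x)=x\,e^{T(x)}$ is the EGF of rooted trees. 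The crucial point is that $T(\widehat c)=c$ for $0<c<1$ (the principal branch, since $\widehat c = c e^{-c}$ with $c<1$), so $1-T(\widehat c)=1-c>0$ and the sum equals $\widehat c\,U'(\widehat c)$, a finite constant depending only on $c$. This is exactly where subcriticality $c<1$ enters: at $c=1$ one has $T(\widehat c)=1$ and the logarithmic term blows up.

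Finally I would show that components of excess $j\ge 2$ contribute only $o(1)$. Bounding $c_{k,m}$ from above by the number of spanning trees times the ways to add the extra edges, $c_{k,k-1+j}\le k^{k-2}\binom{\binom k2}{j}$, the estimates above yield a per-$k$ bound of order $n^{1-j}\,\frac{k^{k-2+2j}}{k!}\,c^{k-1+j}e^{-ck}$ (using a union bound over the $j$ extra edges, each costing a factor $p=c/n$). Summing the weight $k$ times this over $k$ converges because $\frac{k^{k}}{k!}(c e^{-c})^{k}\sim (2\pi k)^{-1/2}(c\,e^{1-c})^{k}$ with $c\,e^{1-c}<1$ for $c<1$, and the polynomial factors in $k$ do not affect convergence; the overall factor $n^{1-j}$ with $j\ge 2$ then forces this part to be $O(n^{-1})=o(1)$. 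Combining the three parts gives $\E{Z}=\widehat c\,U'(\widehat c)+o(1)=O(1)$. The main obstacle is purely technical: making the uniform approximations and the convergence of the excess sums rigorous, rather than any conceptual difficulty — the generating-function identity $T(c e^{-c})=c$ does all the real work and is precisely what fails at criticality.
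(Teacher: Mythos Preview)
The paper does not give its own proof of this lemma: it is stated as a quotation of Theorem~5.7 of Bollob\'as~\cite{bol} and used as a black box. So there is no in-paper argument to compare against; your proposal is a correct, self-contained proof of the cited fact.

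Your argument is the standard one. The decomposition by component size and excess, the use of Lemma~\ref{lem:smallcomp} to truncate at $k\le C\log n$, the identification of the unicyclic contribution via $T(ce^{-c})=c$ (so that $U$ is evaluated strictly inside its disk of convergence when $c<1$), and the crude spanning-tree bound $c_{k,k-1+j}\le k^{k-2}\binom{\binom{k}{2}}{j}$ for higher excess are all sound. Two small points worth tightening if you write this out in full: (i) make the summation over $j\ge 2$ explicit rather than asserting ``$O(n^{-1})$'' --- for $k\le C\log n$ one has $\sum_{j\ge 2}\binom{\binom{k}{2}}{j}(c/n)^j \le \tfrac12\bigl(\tfrac{ck^2}{2n}\bigr)^{2}e^{ck^2/(2n)}=O(k^4/n^2)$, and then the sum over $k$ converges as you say; (ii) note that extending the unicyclic sum from $k\le C\log n$ to all $k$ only changes it by an exponentially small amount, so the closed form $\widehat c\,U'(\widehat c)$ is indeed the limit. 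With those details spelled out the proof is complete.
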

%

\section{The uniform model}\label{sec:ran-trees}

In this section we study the limiting metric dimension for a random tree chosen uniformly at random among all trees with $n$ vertices. This combinatorial family can be encoded by means of generating functions. Since all trees considered in this paper are labelled (namely, vertices carry distinguishable labels), generating functions are exponential in the vertices and ordinary in the rest of the variables. This step is carried out in Subsection~\ref{subs:enumeration}. Later, by means of asymptotic techniques we prove Theorem~\ref{thm:main-tree} in Subsection~\ref{subs:asympt-analysis}.
%

\subsection{Enumeration} \label{subs:enumeration}

%

\subsubsection{Definitions. Intermediate families of trees} \label{subsect:mobiles}
Given a tree $T$, let us consider the tree $R$ obtained from $T$ by erasing vertices of degree $2$, that is, contracting all paths whose interior vertices are of degree $2$, to a single edge (see Figure~\ref{fig:special}).
We call $R$ the \emph{special tree} associated to $T$, and we observe that $R$ does not have vertices of degree $2$.
Reciprocally, $T$ can be obtained from $R$ by subdividing the edges of $R$.
We denote by $\mathcal{S}$ the family of special trees.

\begin{figure}[htb]
\begin{center}
\includegraphics[width=8.5cm]{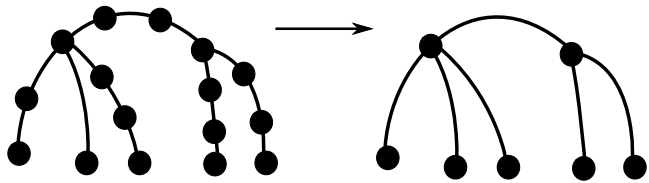}
\end{center}
\caption{A tree (left) and its associated special tree (right).}\label{fig:special}
\end{figure}

Special trees encode all the enumerative information needed to study the metric dimension of random trees and random forests: if $T$ is a tree and $R$ is its associated special tree, then, due to Slater's characterization, $\beta(T)=\beta(R)$.
Moreover, the metric dimension of a special tree, different from a single edge (that could only be obtained when starting from a path), is equal to the number of leaves minus the number of vertices incident to some leaf.
We will exploit this characterization in the next sections.

To study special trees we start with the analysis of an auxiliary family which we name \emph{mobiles}.
The family of mobiles is denoted by $\mathcal{P}$.
Mobiles are rooted trees with a special distinguished half-edge (that we call \emph{leg}) incident with a vertex in the tree which is not a leaf, such that
the degree of each vertex is different from $2$ (the degree of a vertex $v$ is the number of half-edges incident with $v$).
As a special case, the tree with a single vertex incident with a half edge will be also considered inside the family.
We say that the unique vertex incident with the leg is the \emph{root} vertex of the tree.
See Figure~\ref{fig:mobile1} for an example of two mobiles (the leg is represented by an arrow).
\begin{figure}[htb]
\begin{center}
\includegraphics[width=7.0cm]{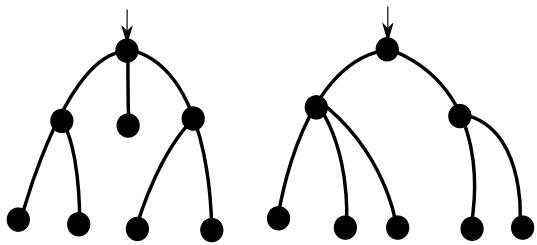}
\end{center}
\caption{Two mobiles. The degrees of the roots are 4 and 3, respectively.}\label{fig:mobile1}
\end{figure}

We use the variable $x$ to encode vertices. The counting formulas considered will be exponential in $x$ (as they are labelled graphs). The other parameters (as the metric dimension) are ordinary: variables $u$, $v$ are used to encode leaves and vertices incident with a leaf. Let
\begin{equation*}\label{eq:rtree-def}
\Tv:=\Tv(x,u,v)=\sum_{n,l,k \geq 0} p_{n,l,k}\frac{x^n}{n!} u^l v^k
\end{equation*}
be the generating function associated to mobiles, where $p_{n,l,k}$ is the number of mobiles with $n$ vertices, $l$ leaves and $k$ internal vertices incident to some leaf. We similarly denote by $S:=S(x,u,v)$, $T:=T(x,u,v)$ and $G:=G(x,u,v)$ the counting formulas for special trees, trees and forests, respectively. Observe that by writing $v^{-1}=u=y$, the variable $y$ encodes the metric dimension in each counting series. We also consider enriched families of rooted trees. In particular, we study families of edge-rooted, edge-oriented rooted and rooted special trees. The corresponding counting formulas are denoted by $S_{\bullet-\bullet}:=S_{\bullet-\bullet}(x,u,v)$, $S_{\bullet\rightarrow\bullet}:=S_{\bullet\rightarrow\bullet}(x,u,v)$ and $S_{\bullet}:=S_{\bullet}(x,u,v)$, respectively. An example of each family is given in Figure~\ref{fig:mobile2}.
\begin{figure}[htb]
\begin{center}
\includegraphics[width=15.0cm]{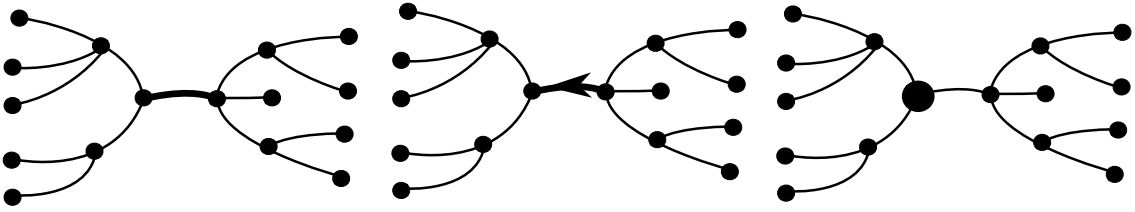}
\end{center}

\caption{Three examples of (edge and vertex) rooted trees.}\label{fig:mobile2}
\end{figure}

The generating function $\Tv$ of mobiles satisfies a recursive description in terms of the degree of the root vertex: a mobile is either a vertex (which is also a leaf, hence it is codified by a term $ux$), or otherwise, the root vertex is incident to a leaf or not. We denote these last two families by $\mathcal{P}_1$ and $\mathcal{P}_2$, and the corresponding counting formulas by $U$, $V$, respectively. For example, the left mobile in Figure~\ref{fig:mobile1} belongs to $\mathcal{P}_1$, and the right one to $\mathcal{P}_2$. In particular:
\begin{equation}\label{eq:mobile1}
\Tv=ux+U+V.
\end{equation}
Let us find relations between $U$, $V$ and $\Tv$. In order to do so, consider the degree of the root vertex of a tree of one of these families. Call it $d+1$. By assumption $d\geq 2$. In the first case, at least one of the pending trees is a leaf, and hence the root vertex must be also codified with $v$. We obtain then the term $\frac{1}{d!}vx\left(\Tv^d-(\Tv-ux)^d\right)$. Finally, summing over  all possible values of $d$ we get
\begin{equation}\label{eq:mobile3}
U=vx\sum_{d\geq 2}\frac{1}{d!}\left(\Tv^d-(\Tv-ux)^d\right)=vx\left(\exp(\Tv)-\exp(\Tv-ux)-ux\right).
\end{equation}
In the second case, the family of mobiles in $\mathcal{P}_2$ whose root vertex is $d+1$ is combinatorially equivalent to
$$\{\bullet\} \times \Setd{\mathcal{P} \setminus \stackrel{\downarrow}{\bullet}},$$
and hence, by the Symbolic Method we have that
\begin{equation} \label{eq:mobile2}
V=x \sum_{d\geq 2} \frac{1}{d!} (\Tv-xu)^d= x\left(\exp(\Tv-ux)-1-\Tv+ux\right).
\end{equation}
Combining \eqref{eq:mobile1}, \eqref{eq:mobile3} and~\eqref{eq:mobile2} we get the following implicit expression for $\Tv$:
\begin{equation}\label{eq:rtree2}
\Tv=(u-1)x+u(1-v)x^2+\left(v+(1-v)\exp(-ux)\right)x\exp(\Tv)-x\Tv.
\end{equation}
Observe that, by writing $u=v=1$ in~\eqref{eq:rtree2}, we recover a slight variation of the classical relation for rooted labelled trees: writing $\Tv(x):=\Tv(x,1,1)$ we have that $\Tv(x)=x\left(\exp(\Tv(x))-\Tv(x)\right)$ (rooted labelled trees without vertices of degree $2$).

\subsubsection{The unrooting argument}\label{subsec:unrooting}
The second step consists in expressing the counting function for rooted special trees (namely, $S_{\bullet\rightarrow\bullet}$ and $S_{\bullet}$) in terms of mobiles. Observe now that
\begin{equation}\label{eq:dys-theorem2}
S(x) = S_{\bullet}(x)-\frac{1}{2}S_{\bullet\rightarrow\bullet}(x)
\end{equation}
because for each tree the number of vertices is by one bigger than the number of edges.

We start by analyzing $S_{\bullet\rightarrow\bullet}$. Observe that cutting the marked edge of an element in $S_{\bullet\rightarrow\bullet}$ determines an ordered pair of rooted trees: each of the resulting two trees has a root (the resulting half edges) and a root vertex (the vertices incident with the initial marked edge). Hence, these two objects are again mobiles. This pair of mobiles is ordered because the root edge is oriented. Hence, the expression for $S_{\bullet\rightarrow\bullet}$ is obtained by combining all possibilities for this pair of trees:
\begin{equation}\label{eq:root-aresta}
S_{\bullet\rightarrow\bullet}=ux^2+2uxU+2uvxV+U^2+V^2+2UV.
\end{equation}
Observe that the term $ux^2$ arises from an oriented edge.

Let us analyze now $S_{\bullet}$. To get its expression in terms of mobiles, we distinguish three cases depending on the degree of the pointed vertex of a tree in $S_{\bullet}$. First, if the degree of the pointed vertex is $0$, then we know that we started from an isolated vertex, hence we have the term $ux$. Second, if the degree of the pointed vertex is equal to 1 (namely, a leaf), we can decompose the counting formula in terms of $U$ and $V$. The strategy is to cut the unique edge incident with the pointed vertex, obtaining the mobile with one vertex (term $ux$) and an arbitrary mobile. This translates in the following way into the generating functions context:
$$ux^2+uxU+uvxV.$$
Now, let us assume that the pointed vertex in our tree in $S_{\bullet}$ has degree greater than 2 (recall that special trees do not have vertices of degree 2). The combinatorial decomposition depends on whether the pointed vertex is incident to a leaf or not. This gives the following counting formula in this situation:
\begin{align*}
x \sum_{d\geq 3} \frac{1}{d!} (\Tv-xu)^d+vx\sum_{d\geq 3}\frac{1}{d!}\left(\Tv^d-(\Tv-ux)^d\right).\\
\end{align*}
Putting all the contributions together gives the following expression for $S_{\bullet}$:
\begin{align}\label{eq:root-vertex}
S_{\bullet}=& ux+ux^2+uxU+uvxV+ (1-v)x \left(\exp(\Tv-ux)-1-(\Tv-ux)-\frac{(\Tv-ux)^2}{2}\right)\\\nonumber
&+ vx\left(\exp(\Tv)-1-\Tv-\frac{\Tv^2}{2}\right).
\end{align}
Applying the final relation (namely, Equation~\eqref{eq:dys-theorem2}), we obtain the desired counting formula. The explicit expression of $S$ in terms of $P$ is long, but it can be deduced immediately from~\eqref{eq:root-aresta} and ~\eqref{eq:root-vertex}. The first terms in the Taylor expansion of $S$ are the following ones:
\begin{align*}
S=& ux+ \frac{1}{2} u x^2+ \frac{1}{6} u^3 v x^4 + \frac{1}{24} u^4 v x^5+  \left(\frac{1}{8} u^4  v^2  + \frac{1}{120} u^5 v \right) x^6 +  \left(\frac{1}{720} u^6 v + \frac{1}{12} u^5  v^2 \right) x^7 \\
&+ \left(\frac{1}{5040} u^7  v + \frac{1}{8} u^5  v^3  + \frac{5}{144} u^6  v^2 \right) x^8+ O(x^9).
\end{align*}
Recall that in this computation trees with vertices of degree 2 are not considered.

\subsubsection{From special trees to trees}\label{subsec:final-trees}

In the last step we can now go back from (unrooted) special trees to (unrooted) trees and forests.  It remains to recover vertices of degree $2$. Observe that general trees are obtained from special trees by substituting each edge by a (possibly empty) sequence of vertices of degree 2. As a tree with $n$ vertices has $n-1$ edges, we need to make the substitution $x^n \leftarrow x^{n} (1-x)^{-n+1}$. Hence, $T=(1-x)S\left(\frac{x}{1-x},u,v\right)$. The first terms in the Taylor expansion of $T$ are the following ones:
\begin{align*}
T =& ux+\frac{1}{2}u x^2 +\frac{1}{2}ux^3 +\left(\frac{1}{2}u+\frac{1}{6}u^3 v\right)x^4+\left(\frac{1}{2}u+\frac{1}{2}u^3v+\frac{1}{24}u^4v\right)x^5\\
&+\left(\frac{1}{2}u+\frac{1}{8}u^4v^2+ u^3 v+ \frac{1}{6}u^4 v+\frac{1}{120}u^5v\right)x^6\\
&+\left(\frac{1}{2}u+\frac{5}{3}u^3 v+\frac{5}{12}u^4 v+\frac{1}{24}u^5 v+\frac{1}{720}u^6 v+\frac{5}{8}u^4 v^2 +\frac{1}{12}u^5 v^2 \right)x^7+O(x^8).
\end{align*}
Notice that the subterms of the form $\frac{1}{2}u x^n$ correspond to paths of length $n$, which are slightly special (their metric dimension is always equal to $1$).

\subsubsection{The final system of equations}\label{subsec:system}

Writing $u=v^{-1}=y$ and collecting all the relations we have obtained so far, we get the desired system of equations. In order to make notation simpler, we write $\Tv(x,y):=\Tv(x,y,y^{-1})$, $U(x,y):=U(x,y,y^{-1})$, and so on:

\begin{equation}\label{eq:system1}
\left\{\begin{array}{cl}
\Tv(x,y) =&  (y-1)x+yx^2\left(1-y^{-1}\right)+\left(y^{-1}+(1-y^{-1})\exp(-yx)\right)x\exp(\Tv(x,y))-x\Tv(x,y) \\
\Tv(x,y) =& yx+U(x,y)+ V(x,y)\\
U(x,y)=& \frac{x}{y}\left(\exp(\Tv(x,y))-\exp(\Tv(x,y)-yx)-yx\right)\\
V(x,y)=&  x\left(\exp(\Tv(x,y)-yx)-1-\Tv(x,y)+yx\right)\\
S_{\bullet\rightarrow\bullet}(x,y) =& yx^2+2yxU(x,y)+2xV(x,y)+U(x,y)^2+V(x,y)^2+2U(x,y)V(x,y)\\
S_{\bullet}(x,y) =& yx+yx^2+yxU(x,y)+xV(x,y)+ y^{-1}x\left(\exp(\Tv(x,y))-1-\Tv(x,y)-\frac{\Tv(x,y)^2}{2}\right)\\
&+ \left(1-y^{-1}\right)x \left(\exp(\Tv(x,y)-yx)-1-(\Tv(x,y)-yx)-\frac{(\Tv(x,y)-yx)^2}{2}\right)\\
S(x,y) =&  S_{\bullet}(x,y)-\frac{1}{2}S_{\bullet\rightarrow\bullet}(x,y)\\
T(x,y)   =&  (1-x)S\left(\frac{x}{1-x},y\right) \\
\end{array}\right.
\end{equation}

It remains to find a last equation concerning forests. This combinatorial class can be defined as the disjoint union of two classes, depending on whether some of the connected components are isolated vertices or not. In the first case, the counting formula arises directly from the set operator (which reads as the exponential function in the generating function context). In the second case we split the contribution into two parts: the contribution of isolated vertices (which is $\frac{1}{y}(\exp(xy)-1)$) and the contribution of  components which are not isolated vertices. Putting these contributions together gives the following expression for the generating function associated to forests:
\begin{equation}\label{eq:forests}
G(x,y)= \exp(T(x,y)-yx)+\frac{1}{y}(\exp(yx)-1)\exp(T(x,y)-yx)
\end{equation}

Resuming, in order to get $T(x,y)$, we first compute  $\Tv(x,y)$ using its implicit definition. Then we can obtain both $U(x,y)$ and $V(x,y)$. This pair of counting formulas, together with $\Tv(x,y)$, defines counting formulas for rooted special families, and in particular, it defines $S(x,y)$. Finally, by a change of variable argument, we can deduce $T(x,y)$, and we finally get $G(x,y)$.

\subsection{Asymptotic analysis} \label{subs:asympt-analysis}

We can now analyze by means of singularity analysis the system of equations \eqref{eq:system1} obtained in Subsection~\ref{subsec:system}. The strategy is the following: we first examine the singular behavior of $\Tv(x,y)$ by means of Theorem~\ref{eq:sq-single}. The singular expansion we obtain is translated to all families of (rooted) special trees. One needs to be more careful when dealing with $S(x,y)$, as the equation has negative coefficients and some terms cancel. Later, we deal with the change of variables that defines $T(x,y)$ in terms of $S(x,y)$ and finally we apply the Quasi-Powers Theorem (Theorem~\ref{theo:quasi-powers}) to get the values of the parameters of the resulting normal limiting distribution. In all this section we write $X(y):= (1-x/\rho(y))^{1/2}$, where $\rho(y)$ is an analytic function at $y=1$ that will be defined below.

We start by analyzing the singular behavior of $\Tv(x,y)$:

\begin{lemma}\label{lemma:sing-P}
Let $\rho(y)$ an analytic function in a neighborhood of the origin satisfying the implicit relation
$$1+\rho(y)= \left(1+\frac{e^{y\rho(y)}-1}{y}\right)\rho(y)e^{1-\rho(y)}.$$
Then, the counting formula $\Tv(x,y)$ has a unique square-root singularity when $y$ varies around $y=1$:
\begin{equation}
\label{eq:sing-P}
\Tv(x,y) = \Tv_0(y) + \Tv_1(y) X(y)  + \Tv_2(y)X(y)^2+ \Tv_3(y) X(y)^3+  O\left(X(y)^4\right),
\end{equation}
uniformly with respect to $y$ for $x$ in a small neighborhood of $\rho(y)$, and with $\Tv_0(y)$, $\Tv_1(y)$, $\Tv_2(y)$, $\Tv_3(y)$ analytic in a neighbourhood of $y=1$. More precisely,
$\rho(1)^{-1}=e-1$. Furthermore, we have that $\rho'(1) \simeq -0.12960268 $ and $\rho''(1) \simeq 0.11039081$.
\end{lemma}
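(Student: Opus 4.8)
The plan is to apply Theorem~\ref{eq:sq-single} (square-root singularity for implicit equations) to the implicit equation for $\Tv(x,y)$ in the first line of system~\eqref{eq:system1}, which has the form $\Tv = F(x,y,\Tv)$ with
$$
F(x,y,z) = (y-1)x + yx^2(1-y^{-1}) + \bigl(y^{-1}+(1-y^{-1})e^{-yx}\bigr)\, x\, e^{z} - xz.
$$
First I would verify the hypotheses of Theorem~\ref{eq:sq-single}: that $F$ is analytic around the origin, that $F(0,y,z)\equiv 0$ (every summand carries a factor $x$), that $F(x,y,0)\neq 0$, and that the Taylor coefficients are non-negative in the relevant range. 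The non-negativity is slightly delicate because of the $-xz$ term and the $(1-y^{-1})$ factors; here I would lean on the fact that $F$ is the genuine generating function of a combinatorial class built by the symbolic method, so the coefficients of $\Tv$ itself are non-negative by construction, and I would check the analytic (rather than purely formal) positivity conditions only at $y=1$, where all signs are manifestly controlled.

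**Next I would solve the characteristic system~\eqref{eq:syst-fonamental}** at $y=1$, namely $z=F(x,1,z)$ together with $1=\partial_z F(x,1,z)$. At $y=1$ the function collapses to $F(x,1,z)=x\,e^{z}-xz$, recovering the stated reduced equation $\Tv(x)=x(e^{\Tv}-\Tv)$. Writing $\partial_z F(x,1,z)=x e^{z}-x$ and setting it equal to $1$ gives $x(e^{z}-1)=1$, while $z=x(e^{z}-z)$ gives the location of the singularity. I expect these two to combine into $\rho(1)^{-1}=e-1$ with $\tau=\Tv_0(1)=1$, which I would confirm by direct substitution: if $z=1$ then $e^{z}-1=e-1$ so $x=1/(e-1)$, and $z=x(e-1)=1$ is consistent. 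The general defining relation for $\rho(y)$ should then emerge from eliminating $z$ between the two branch-point equations for general $y$, and I would check that the stated implicit relation
$$
1+\rho(y)=\Bigl(1+\tfrac{e^{y\rho(y)}-1}{y}\Bigr)\rho(y)e^{1-\rho(y)}
$$
is exactly this elimination. Theorem~\ref{eq:sq-single} then delivers the square-root expansion~\eqref{eq:sing-P} for free, with analytic coefficient functions $\Tv_i(y)$ and analytic $\rho(y)$, uniformly in a neighborhood of $y=1$, provided the side conditions $\partial_x F(\rho,1,\tau)\neq 0$ and $\partial^2_{yy}F(\rho,1,\tau)\neq 0$ hold, which I would verify by explicit differentiation.

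**The remaining quantitative claims**, $\rho'(1)\simeq -0.12960268$ and $\rho''(1)\simeq 0.11039081$, I would obtain by implicit differentiation of the defining relation for $\rho(y)$. Differentiating once at $y=1$ (using $\rho(1)=1/(e-1)$) yields a linear equation for $\rho'(1)$, and differentiating a second time yields a linear equation for $\rho''(1)$ in terms of $\rho(1)$ and $\rho'(1)$; both reduce to elementary algebra once the value of $\rho(1)$ and the derivatives of the right-hand side are inserted. The appearance of $e^{y\rho(y)}$ means these derivatives carry several exponential terms, so the arithmetic is tedious but entirely mechanical, and I would simply report the numerical values.

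**The main obstacle** I anticipate is not the singularity analysis itself, which Theorem~\ref{eq:sq-single} automates, but rather the bookkeeping needed to put $F(x,y,z)$ into a form to which the theorem literally applies: verifying the non-negativity of Taylor coefficients despite the visible minus signs, and checking that the reduced equation at $y=1$ has the branch point at an \emph{interior} point of the domain of analyticity (so that the expansion is genuinely of square-root type and not obscured by a competing singularity). I would handle the sign issue by appealing to the combinatorial origin of $\Tv$ and by noting that the problematic terms vanish or simplify at $y=1$, and handle the branch-point location by the explicit computation above showing $\rho(1)=1/(e-1)$ lies strictly inside the disk of convergence of $e^{z}$.
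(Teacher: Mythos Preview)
Your overall plan is right, and your computation at $y=1$ (giving $\rho(1)=1/(e-1)$ and $\tau=\Tv_0(1)=1$) is correct. However, there is a genuine gap precisely where you flag it: the non-negativity hypothesis of Theorem~\ref{eq:sq-single} is on the Taylor coefficients of the defining function $F(x,y,z)$, not on those of the solution $\Tv(x,y)$. Appealing to the combinatorial origin of $\Tv$ does not help here, and ``checking only at $y=1$'' is not enough, since the theorem is stated for $F$ with non-negative coefficients as a trivariate series. Your $F$ genuinely fails this hypothesis: already $(y-1)x$ contributes a bare $-x$, and the factors $y^{-1}$ prevent $F$ from even being a power series in $y$ around the origin. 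The paper in fact states explicitly that Theorem~\ref{eq:sq-single} cannot be applied directly to $F$ for this reason.

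The fix, which is the one idea missing from your proposal, is the substitution $\Tv(x,y)=xy+W(x,y)$. Then $W$ satisfies $W=H(x,y,W)$ with
\[
H(x,y,z)=-(x+x^2)+\Bigl(1+\frac{e^{xy}-1}{y}\Bigr)xe^{z}-xz.
\]
Now $(e^{xy}-1)/y=\sum_{k\ge1}x^{k}y^{k-1}/k!$ is a bona fide power series with non-negative coefficients, and upon expanding one checks that the $-x$, $-x^2$, and $-xz$ are exactly cancelled by the constant, $x$, and $z$ terms coming from $xe^{z}$ and $\bigl((e^{xy}-1)/y\bigr)xe^{z}$, so \emph{all} Taylor coefficients of $H$ are non-negative. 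Theorem~\ref{eq:sq-single} then applies cleanly to $W$ (the characteristic system gives $\rho=(e-1)^{-1}$, $\tau=W_0(1)=(e-2)/(e-1)$, and the side conditions on $\partial_x H$ and $\partial_{yy}^2 H$ are easily checked), and since $xy$ is entire the square-root expansion transfers directly to $\Tv=xy+W$. The implicit relation for $\rho(y)$ and the numerical values of $\rho'(1)$, $\rho''(1)$ then follow by implicit differentiation exactly as you outline.
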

\begin{proof}
Write the first equation in System~\eqref{eq:system1} in the form $\Tv(x,y)=F(x,y,\Tv(x,y))$, where
$$F(x,y,z)=x(y-1)+x^2y\left(1-y^{-1}\right)+\left(y^{-1}+(1-y^{-1})\exp(-xy)\right)x\exp(z)-xz.$$
Observe that we cannot apply directly Theorem~\ref{eq:sq-single} because the Taylor coefficients of $F(x,y,z)$ are both positive and negative. In order to overcome this difficulty, write $\Tv(x,y)=xy+W(x,y)$. Then, $W(x,y)$ satisfies the implicit equation formula
\begin{equation*}
W(x,y)=-(x+x^2)+\left(1+\frac{e^{xy}-1}{y}\right)xe^{W(x,y)}-xW(x,y).
\end{equation*}
Hence, $W(x,y)=H(x,y,W(x,y))$ for the multivariate entire function
$$H(x,y,z)=-(x+x^2)+\left(1+\frac{e^{xy}-1}{y}\right)xe^{z}-xz.$$

Developing the exponential terms in $H(x,y,z)$ it is straightforward to check that the Taylor coefficients of $H$ are non-negative. Additionally, $H(0,y,z)$ is identically equal to 0 and $H(x,y,0) \neq 0$. The system of equations given in~\eqref{eq:syst-fonamental} is the following one:
$$\tau=H(\rho,1,\tau),\, 1=\frac{\partial}{\partial z } H(\rho,1,\tau),$$
which has the solution $\rho=(e-1)^{-1}$, $\tau= \frac{e-2}{e-1}$. Additionally, it is easy to check that $\frac{\partial}{\partial x} H(\rho,1,\tau)\neq 0$ and $\frac{\partial^2}{\partial y^2} H(\rho,1,\tau)\neq 0$. We are then under the assumptions of Theorem~\ref{eq:sq-single}, and $W(x,y)$ has a square-root expansion of the form
$$
W(x,y) = W_0(y) + W_1(y) X(y)+W_2(y)X(y)^2+ W_3(y) X(y)^3+  O\left(X(y)^4\right),
$$
locally around $x=\rho(y)$ (in a neighborhood of $y=1$), with $W_0(y)$, $W_1(y)$, $W_2(y)$, $W_3(y)$ and $\rho(y)$ analytic in this neighborhood, such that $W_0(1)=\frac{e-2}{e-1}$. Finally, because $\Tv(x,y)=xy+W(x,y)$ and $xy$ is an entire function, we conclude that $\Tv(x,y)$ has the square-root expansion
$$
\Tv(x,y) = \Tv_0(y) + \Tv_1(y) X(y)+  \Tv_2(y)X(y)^2+ \Tv_3(y) X(y)^3+  O\left(X(y)^4\right),
$$
with $\Tv_0(y)$, $\Tv_1(y)$,$\Tv_2(y)$ and $\Tv_3(y)$ analytic in a neighborhood of $y=1$, and $\Tv_0(1)=1$.

Let us move to the study of the derivatives of $\rho(y)$ evaluated at $y=1$. For each choice of $y$ in a neighborhood of $1$, the system of equations $\tau=H(x,y,\tau),\, 1=\frac{\partial}{\partial z}H(x,y,\tau)$ has a unique solution $(x,z)=(\rho(y),\Tv(\rho(y),y))$. From this set of equations we deduce that $\rho(y)$ satisfies the implicit formula
\begin{equation*}\label{eq:rho}
1+\rho(y)= \left(1+\frac{e^{y\rho(y)}-1}{y}\right)\rho(y)e^{1-\rho(y)},
\end{equation*}
from which we can deduce (by successive derivatives) exact expressions for both $\rho'(1)$ and $\rho''(1)$. Indeed, expressions for $\rho'(1)$ and $\rho''(1)$ can be computed exactly, but they are long. We only provide exact numerical approximations to these values.
\end{proof}

In fact, we can determine by means of indeterminate coefficients the different functions in $y$ involved in Lemma~\ref{lemma:sing-P}. For example, the first term $\Tv_0(y)$ satisfies the implicit equation $\Tv_0(y)=F(\rho(y),y,\Tv_0(y))$. It is clear that the singular behaviour of $U(x,y)$, $V(x,y)$, $S_{\bullet\rightarrow\bullet}(x,y)$ and $S_{\bullet}(x,y)$ are the same as the one for $\Tv(x,y)$, because the previous equations are analytic transformations of the last counting formula (since we consider $y$ close to $1$, the function $1/y$ is analytic). Indeed, by straightforward computations (i.e., Taylor expansions) one can see that the singular expansions of $U(x,y)$, $V(x,y)$, $S_{\bullet\rightarrow\bullet}(x,y)$ and $S_{\bullet}(x,y)$ are also of square-root type.

However, this is not the case when dealing with $S(x,y)$: expanding $S(x,y)$ around $x=\rho(y)$ we obtain the singular expansion
$$
S(x,y) = S_0(y) +  S_2(y)X(y)^2+ S_3(y) X(y)^3+  O\left(X(y)^4\right)
$$
for certain functions $S_0(y)$, $S_2(y)$ and $S_3(y)$ analytic at $y=1$.
In other words, the corresponding term $S_1(y)$ vanishes in a neighborhood of $y=1$.
This fact can be argued analytically in the following way: since $S_{\bullet}(x,y)=x \frac{\partial}{\partial x}S(x,y)$, we have
$$S(x,y)=\int_{0}^{x}\frac{S_{\bullet}(s,y)}{s} ds$$
This integral gives the previous result because $S_{\bullet}(x,y)$ has an square-root expansion around $x=\rho(y)$.
This argument is the analytic counterpart of the unrooting argument which arises from the relation $S(x,y) =  S_{\bullet}(x,y)-\frac{1}{2}S_{\bullet\rightarrow\bullet}(x,y)$.

The last step needed is to obtain $T(x,y)$ from $S(x,y)$. We encapsulate this analysis in a lemma.

\begin{lemma}\label{lemma:sing-T}
Let
\begin{equation*}\label{eq:afegir2}
R(y)=\frac{\rho(y)}{1+\rho(y)},
\end{equation*}
where $\rho(y)$ is the function defined in Lemma~\ref{lemma:sing-P}, and let $\overline{X}(y)=\sqrt{1-x/R(y)}$. Let $y$ be a positive real number in an small neighbourhood of $1$. Then the generating function $\Tv(x,y)$ has a unique singularity at $x=R(y)$, and it admits a singular expansion at this point of the form
\begin{equation*}
\label{eq:sing-P}
T(x,y) = T_0(y) + T_2(y)\overline{X}(y)^2+ T_3(y) \overline{X}(y)^3+  O\left(\overline{X}(y)^4\right),
\end{equation*}
uniformly with respect to $y$ for $x$ in a small neighborhood of $R(y)$, and with $T_0(y)$, $T_2(y)$ and $T_3(y)$ analytic at $y=1$. More precisely,
$R(1)^{-1}=e^{-1}\simeq 0.36787944 $. Furthermore, we have that $R'(1) \simeq -0.05178617 $ and $R''(1) \simeq 0.03562445$.
\end{lemma}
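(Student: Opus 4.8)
The plan is to treat the passage from $S(x,y)$ to $T(x,y)=(1-x)S\!\left(\frac{x}{1-x},y\right)$ as a conformal change of variable that transports one square-root singularity to another. First I would record that the map $\phi(x)=\frac{x}{1-x}$ is a M\"obius transformation, analytic and invertible in a neighbourhood of the origin, with inverse $\psi(w)=\frac{w}{1+w}$ and a single pole at $x=1$. Since $S(w,y)$ has, by Lemma~\ref{lemma:sing-P} and the discussion following it, a unique dominant square-root singularity at $w=\rho(y)$, pulling this back through $\phi$ shows that the only singularity of $x\mapsto S(\phi(x),y)$ near the origin sits at the solution of $\frac{x}{1-x}=\rho(y)$, namely $x=\frac{\rho(y)}{1+\rho(y)}=R(y)$; neither the prefactor $(1-x)$ nor the pole of $\phi$ at $x=1$ introduces a new singularity, because $R(1)=e^{-1}\neq 1$. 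Uniformity in $y$ is inherited from Lemma~\ref{lemma:sing-P}, as $R(y)$, $\rho(y)$ and $\phi$ are all analytic in $y$ near $1$. From $\rho(1)=(e-1)^{-1}$ one gets $R(1)=\frac{(e-1)^{-1}}{1+(e-1)^{-1}}=e^{-1}$, as claimed.

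The heart of the argument is a single algebraic identity relating the two local uniformizing variables. Writing $X(y)^2=1-\frac{\phi(x)}{\rho(y)}$ for the square-root variable of $S$ evaluated at $w=\phi(x)$, and $\overline{X}(y)^2=1-\frac{x}{R(y)}$, a direct computation gives
\begin{equation*}
X(y)^2=\frac{\rho(y)-x(1+\rho(y))}{(1-x)\rho(y)}=\frac{\overline{X}(y)^2}{1-x},
\end{equation*}
so that $X(y)=\overline{X}(y)\,(1-x)^{-1/2}$. Because $R(1)=e^{-1}\neq 1$, the factor $(1-x)^{\pm 1/2}$ is analytic and non-vanishing near $x=R(y)$, and since $x=R(y)\bigl(1-\overline{X}(y)^2\bigr)$ it is in fact an analytic function of $\overline{X}(y)^2$. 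Substituting $S(w,y)=S_0(y)+S_2(y)X^2+S_3(y)X^3+O(X^4)$ into $T=(1-x)S(\phi(x),y)$ and using the identity termwise, the factor $(1-x)$ cancels the $X^2=\overline{X}^2/(1-x)$ in the $S_2$ term, turns $(1-x)S_0$ into an analytic function of $\overline{X}^2$, and converts $(1-x)S_3X^3$ into $S_3(1-x)^{-1/2}\overline{X}^3$. Collecting powers yields
\begin{equation*}
T(x,y)=T_0(y)+T_2(y)\overline{X}(y)^2+T_3(y)\overline{X}(y)^3+O\bigl(\overline{X}(y)^4\bigr),
\end{equation*}
with $T_0=(1-R)S_0$, $T_2=R\,S_0+S_2$ and $T_3=(1-R)^{-1/2}S_3$, all analytic at $y=1$. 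Crucially, the coefficient of $\overline{X}^1$ vanishes: the only sources of odd powers of $\overline{X}$ are the odd-power terms of $S$, which by the preceding discussion begin at $X^3$, so no linear term can appear.

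Finally I would extract the numerical data by differentiating $R(y)=\frac{\rho(y)}{1+\rho(y)}$. One finds
\begin{equation*}
R'(y)=\frac{\rho'(y)}{(1+\rho(y))^2},\qquad R''(y)=\frac{\rho''(y)}{(1+\rho(y))^2}-\frac{2\rho'(y)^2}{(1+\rho(y))^3};
\end{equation*}
evaluating at $y=1$ with $\rho(1)=(e-1)^{-1}$, $1+\rho(1)=e(e-1)^{-1}$ and the values $\rho'(1)\simeq-0.12960268$, $\rho''(1)\simeq 0.11039081$ from Lemma~\ref{lemma:sing-P}, gives $R'(1)\simeq-0.05178617$ and $R''(1)\simeq 0.03562445$. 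The only genuinely delicate point in the whole argument is verifying that the change of variable produces no spurious singularity and that the square-root type is preserved; this is exactly what the identity $X^2=\overline{X}^2/(1-x)$, together with the analyticity of $(1-x)^{-1/2}$ at $x=R(1)=e^{-1}$, guarantees. Everything else is routine bookkeeping of analytic prefactors.
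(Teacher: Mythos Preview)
Your proof is correct and follows essentially the same approach as the paper: both transport the singular expansion of $S(x,y)$ at $\rho(y)$ through the M\"obius change of variable $x\mapsto x/(1-x)$ to locate the singularity of $T(x,y)$ at $R(y)=\rho(y)/(1+\rho(y))$, and then differentiate $R(y)$ to obtain the numerical values. The paper's argument is much terser---it simply invokes the holomorphicity of $z\mapsto z/(1-z)$ away from $z=1$ to assert that the singular type is preserved---whereas you make this transfer explicit via the identity $X^2=\overline{X}^2/(1-x)$ and actually compute $T_0,T_2,T_3$ in terms of $S_0,S_2,S_3$; this extra bookkeeping is sound and makes transparent why no $\overline{X}^1$ term appears.
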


\begin{proof}
For each choice of $y$ in a neighborhood of $1$, the smallest real positive singularity of $S(x,y)$ is located at $x=\rho(y)$. Hence, for this value of $y$, $T(x,y)$ has a unique smallest real singularity at $R(y)$, such that $R(y)$ satisfies the condition
\begin{equation*}\label{eq:rho-R}
\rho(y)=\frac{R(y)}{1-R(y)}.
\end{equation*}
The map $f(z)=\frac{z}{1-z}$ is holomorphic in all points $z\neq 1$. Thus, the singular expansion of $S(x,y)$ around $x=\rho(y)$ and $y=1$ is translated directly into the singular expansion of $T(x,y)$ around $x=R(y)$ and $y=1$. Finally, we obtain expressions for $R'(y)$ and $R''(y)$ from the equation satisfied by $\rho(y)$ claimed in Lemma~\ref{lemma:sing-P}. Joint with the estimates for $\rho'(1)$ and $\rho''(1)$ obtained in Lemma \ref{lemma:sing-P}, the estimates claimed for $R'(1)$ and $R''(1)$ hold.
\end{proof}

Now, Theorem~\ref{thm:main-tree} is an easy consequence of Lemma~\ref{lemma:sing-T}: the expansion of $T(x,y)$ around its smallest singularity satisfies the assumptions of Theorem~\eqref{theo:quasi-powers}. Additionally,
$$
\mu=-\frac{R'(1)}{R(1)}\simeq 0.1407694113 ,\,\,\sigma^2 = -\frac{R''(1)}{R(1)}-\frac{R'(1)}{R(1)}+\left(\frac{R'(1)}{R(1)}\right)^2\simeq 0.06374815134,
$$
and Theorem~\ref{thm:main-tree} follows by observing $\sigma^2\neq 0$ and applying the Quasi-Powers Theorem. Since the counting formula for forests is an analytic transform of the generating function for trees (see Equation~\eqref{eq:forests}, and recall that the exponential function is an analytic transform), the same result holds in random forests. Hence, the limit law corresponding to $F_n$ as it is stated in Theorem~\ref{thm:main-tree} holds.

\section{The $G(n,p)$ model: proof of Theorem~\ref{thm:main-Gnp}}\label{sec:G(n,p)}

The proof  of (i) in Theorem~\ref{thm:main-Gnp} is a straightforward calculation, and we give it here for the sake of completeness.
Denote by $I$ the random variable counting the number of isolated vertices and by $N$ the random variable counting non-isolated vertices.
Then, for $p \in o\left(n^{-1}\right)$, the probability that a fixed vertex is isolated is equal to $(1-p)^{n-1}$.
Hence, $\E{I}=n(1-p)^{n-1} = n(1+o(1))$ and finally, $\E{N}=o(n)$.
By Markov's inequality, $N=o(n)$ a.a.s., and hence $I=n(1+o(1))$ a.a.s.
As every isolated vertex except for one has to be taken into a resolving set, $\beta(G)=n(1+o(1))$ a.a.s., and (i) of Theorem~\ref{thm:main-Gnp} follows.

Now, consider $G(n,p)$ with $p=\frac{c}{n}$ for some constant $0 < c < 1$. We first give an overview over the proof: for the expected value of $\beta(G)$, we recall standard results about the component structure and degree distribution of vertices. In order to apply Slater's characterization, we find the expected number of vertices of degree at least $3$ that are either adjacent to a leaf or is connected to a leaf via a path of degree $2$ vertices. Note that in this section, in contrast to the previous one, we cannot simply leave out chains of degree $2$ vertices and then subdivide edges, but we rather have to compute the probability of having chains of certain lengths. Next, in order to compute the variance we compute all possible joint second moments of the graph-theoretic concepts appearing in the expectation. The details are lengthy, but the idea is simple: we show that the joint expectations are up to smaller order terms as the product of the expectations. The result will then follow by applying Stein's method.

We first compute the expectation of the metric dimension of a random graph in this model.
\begin{lemma}\label{lem:expGnp}
$$
\E{\beta(G)}=ne^{-c}\left(1+c-\sum_{k\geq 3}\frac{c^k}{k!}\left(1-\left(\frac{1-(c+1)e^{-c}}{1-ce^{-c}}\right)^k\right)-\sum_{k\geq 2}\frac{1}{2}c^{k-1}e^{-(k-1)c}\right)(1+o(1)).
$$
\end{lemma}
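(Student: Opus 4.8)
My plan is to use Slater's characterization to turn $\beta(G)$ into a sum of simple vertex- and component-counting statistics, reduce to tree components using subcriticality, and then evaluate each statistic via the local Galton--Watson tree structure of $G(n,c/n)$.

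\textbf{Reduction to tree components.} By the component additivity of the metric dimension together with the isolated-vertex correction recalled in the introduction, we may write $\beta(G)=\sum_{C}\beta(C)-\mathbf{1}[G\text{ has an isolated vertex}]$, where the sum runs over all connected components $C$. Splitting the components into trees and non-trees, the non-tree contribution is bounded by the number $Z$ of vertices lying off trees, so by Lemma~\ref{lem:notrees} its expectation is $\E{\sum_{C\text{ non-tree}}\beta(C)}\le \E{Z}=O(1)$, and the indicator term is at most $1$. Hence $\E{\beta(G)}=\E{\sum_{C\text{ tree}}\beta(C)}+O(1)$, and it suffices to handle tree components.

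\textbf{Linearization via Slater.} For each tree component I apply the characterization $\beta(T)=|L|-|K|$, where $L$ is the set of leaves and $K$ the set of \emph{exterior major vertices} (degree $\ge 3$, joined to some leaf by a path of degree-$2$ interior vertices), keeping in mind the exceptional cases $\beta=1$ for paths and for isolated vertices. Introducing the counting variables $N_0$ (isolated vertices), $N_1$ (leaves of tree components), $N_K$ (exterior major vertices of tree components), and $N_P$ (path components on $\ge 2$ vertices), a short case check (an isolated vertex contributes $1$; a path on $k\ge2$ vertices has $\beta=1$ but two leaves; a genuine non-path tree contributes $|L|-|K|$) yields the exact identity $\sum_{C\text{ tree}}\beta(C)=N_0+N_1-N_K-N_P$. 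Thus $\E{\beta(G)}=\E{N_0}+\E{N_1}-\E{N_K}-\E{N_P}+O(1)$. The first three are routine: $\E{N_0}=n(1-p)^{n-1}=ne^{-c}(1+o(1))$; $\E{N_1}=n(n-1)p(1-p)^{n-2}(1+o(1))=nce^{-c}(1+o(1))$ (the restriction to tree components costs only $O(1)$ by Lemma~\ref{lem:notrees}); and the standard subcritical count of path components gives $\E{N_P}=\sum_{k\ge2}\tfrac n2 c^{k-1}e^{-ck}(1+o(1))=ne^{-c}\sum_{k\ge2}\tfrac12 c^{k-1}e^{-(k-1)c}(1+o(1))$, where the tail of the sum converges geometrically because $ce^{-c}<1$ for $0<c<1$, allowing the $(1+o(1))$ to be taken uniformly.

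\textbf{The main term $\E{N_K}$.} By symmetry $\E{N_K}=n\,\Prob(v\in K)$ for a fixed vertex $v$. Conditioning on $\deg v=d$ (with $\Prob(\deg v=d)\to e^{-c}c^d/d!$), the vertex $v$ lies in $K$ iff at least one of its $d$ branches is a \emph{leg}, i.e.\ a pendant path of degree-$2$ vertices terminating in a leaf. In the local limit, where the neighborhood of $v$ is a Galton--Watson tree with $\mathrm{Poisson}(c)$ offspring, the $d$ branches decouple and each is a leg with probability $\pi$ satisfying the fixed-point relation $\pi=e^{-c}+ce^{-c}\pi$ (the first neighbor is itself a leaf, or it has exactly one further neighbor whose branch is again a leg), whence $\pi=\frac{e^{-c}}{1-ce^{-c}}$ and a branch \emph{fails} to be a leg with probability $q=1-\pi=\frac{1-(c+1)e^{-c}}{1-ce^{-c}}$. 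Therefore $\Prob(v\in K\mid\deg v=d)=1-q^{\,d}$ and $\E{N_K}=ne^{-c}\sum_{d\ge3}\frac{c^d}{d!}\bigl(1-q^{\,d}\bigr)(1+o(1))$. Substituting the four expectations into $\E{N_0}+\E{N_1}-\E{N_K}-\E{N_P}$ gives exactly the claimed expression.

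\textbf{Main obstacle.} The delicate point is making the branching-process evaluation of $q$ rigorous: one must show that, after multiplication by $n$, the local neighborhood of a typical vertex is a subcritical $\mathrm{Poisson}(c)$ tree up to total error $o(n)$, that distinct branches are asymptotically independent, and that legs of unbounded length together with components of size up to $O(\log n)$ can be summed with uniform error control. This is precisely where Lemmas~\ref{lem:smallcomp} and~\ref{lem:notrees} enter; the cleanest rigorous route is a direct enumeration showing that the expected number of legs of length exactly $j$ hanging from a fixed vertex is $(ce^{-c})^{j-1}e^{-c}(1+o(1))$, summing the geometric series to recover $\pi$, and bounding the probability that two legs, or a leg and a cycle, interfere near $v$ by $O(1/n)$ per vertex.
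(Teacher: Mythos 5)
Your proposal follows essentially the same route as the paper's proof: the identical Slater linearization $\E{\beta(G)}=\E{I}+\E{L}-\sum_{k\geq 3}\E{J_k}-\sum_{k\geq 2}\E{P_k}+O(1)$ (your $N_0,N_1,N_K,N_P$ are the paper's $I$, $L$, $\sum_k J_k$, $\sum_k P_k$, with non-tree components absorbed into an $O(1)$ term via Lemma~\ref{lem:notrees}), and the identical fixed-point relation $\pi=e^{-c}+ce^{-c}\pi$ for the probability that a branch is a pendant path, which is exactly the paper's recursion for a neighbor being ``thin'', followed by the same claim of asymptotic independence of the $d$ branches to get $1-q^d$. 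The technical work you defer under ``main obstacle'' (decoupling of branches and uniform error control over leg lengths and component sizes) is precisely what the paper's proof spends its length on, via term-by-term expansion of the thin-vertex recursion with $(1+O(k^3/n))$ correction factors and truncation at component size $O(\log n)$ using Lemma~\ref{lem:smallcomp}.
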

\begin{proof}
For a fixed vertex $v$, denote by $X_v$ the random variable counting its vertex degree. We have
$$
\Prob(X_v=1)=(n-1)p(1-p)^{n-2} = ce^{-pn}(1+o(1))=ce^{-c}\left(1+O\left(1/n\right)\right),
$$
and in general, for any $k$,
$$
\Prob(X_v=k)=\binom{n-1}{k}p^k(1-p)^{n-k-1}=\frac{c^k}{k!}e^{-c}\left(1+O\left(k/n\right)\right).
$$
Hence, denoting by $L$ the number of leaves, we obtain $\E{L}=nce^{-c}\left(1+O\left(1/n\right)\right)$. Similarly, denoting by $D_k$ the number of vertices of degree $k$, we obtain $\E{D_k}=n\frac{c^k}{k!}e^{-c}\left(1+O\left(k/n\right)\right)$ and for $k \in \omega(\log n)$, by Lemma~\ref{lem:smallcomp}, $D_k=0$ a.a.s. Also, $\E{I}=ne^{-c}\left(1+O\left(1/n\right)\right)$.

Next, denote by $T_k$ the number of connected components in a random graph $G(n,p)$ that are trees of size $k \geq 2$, and by $P_k$ the number of paths of size $k \geq 2$. Recall that the number of labelled trees of size $k$ is equal to $k^{k-2}$. Observing that in $G(n,p)$ all $k^{k-2}$ labelled trees on $k$ vertices are equally likely to appear, and since there are $k!/2$ labelled paths on $k$ vertices, we have for $k \in O(\log n)$,
$$\E{T_k}=nk^{k-2}\frac{c^{k-1}}{k!}e^{-kc}(1+o(1)),\,\,\E{P_k}=n\frac{c^{k-1}}{2}e^{-kc}(1+o(1))$$
and by Lemma~\ref{lem:smallcomp}, a.a.s., for all $k \in \omega(\log n)$, $T_k$ and $P_k$ are $0$.
Using Stirling's formula, we obtain
$$
\E{T_k} = n \frac{ (ce)^k e^{-kc}}{ c k^2 \sqrt{2 \pi k}}(1+o(1))=n \frac{(ce^{1-c})^k}{c k^2 \sqrt{2 \pi k}}(1+o(1)),
$$
and since for $c < 1$, we have $ce^{1-c} < 1$, the expected number of trees decreases exponentially in $k$, and we obtain
\begin{equation}\label{eq:expdecrease}
\E{T_k} =  n\alpha_0^k (1+o(1))
\end{equation}
for some $0 < \alpha_0 < 1$. Since by Lemma~\ref{lem:notrees} there are in expectation only $O(1)$ vertices which do not belong to trees, the same result holds for component sizes in general. Since for any $c < 1$, clearly  $ce^{-c} < 1$, we also have
\begin{equation}\label{eq:expdecreaseP}
\E{P_k} = n\alpha_1^k (1+o(1))
\end{equation}
for some $0 < \alpha_1 < 1$, and the same holds then also for the number of paths. Since the number of vertices in trees of size $k$ is equal to $kT_k$, also this number decreases exponentially in $k$, with again a different $0 < \alpha_2 < 1$. Finally, once more with some different $0 < \alpha_3 < 1$, the same holds for the number of  pairs of vertices $R_k$ belonging to the same tree of size $k$, since $R_k=\binom{k}{2}T_k.$   In particular, denoting by $R=\sum_{k \geq 2} R_k$ the random variable counting all pairs of vertices belonging to the same connected component,
\begin{equation}\label{eq:nosamecomp}
\E{R} = (1+o(1))n \sum_{k \geq 2} \alpha_3^k = O(n).
\end{equation}

Now, in order to apply Slater's characterization for trees, we define two special concepts: call a vertex $v$ to be \emph{thin}, if it is in a tree component and if it is either a leaf or if it  is of degree $2$ adjacent to another vertex that is thin. Call a vertex $w$ \emph{important} of degree $k$, if it is in a tree component, if it has degree $k \geq 3$ and if it has at least one thin neighbor. See Figure~\ref{fig:thin} for an example of a tree where thin and important vertices are shown. Observe that the set of all important vertices of degree $k \geq 3$ is exactly the set $K$ in Slater's characterization, and thus, for a tree different from a path, its expected metric dimension can be calculated by subtracting the number of all such important vertices from the number of leaves.
\begin{figure}[htb]
\begin{center}
\includegraphics[width=3.8 cm]{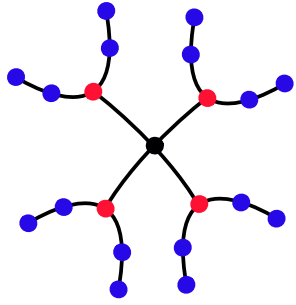}
\end{center}
\caption{A tree whose thin vertices are blue and important vertices are red}\label{fig:thin}
\end{figure}

For a given vertex $w$, expose the edges and non-edges incident to $w$  and suppose that $w$ has degree $k$ with neighbors $v_1,\ldots,v_k$ for some $k \geq 3$. We may assume that $k \in O(\log n)$.  Call another possible neighbor of $v_1$ to be $u$ (different from $v_2,\ldots,v_k$). Then
\begin{equation}\label{eq:thin1}
\Prob(v_1 \mbox{ thin})=\Prob(N(v_1) \cap (V \setminus \{w\})=\emptyset)+\Prob(N(v_1) \cap (V \setminus \{w\})=\{u\}) \Prob(u \mbox { thin } | \; \{u,v_1\} \in E).
\end{equation}
We have
\begin{equation}\label{eq:thin2}
\Prob(N(v_1) \cap (V \setminus \{w\})=\emptyset)=e^{-c}\left(1+O\left(1/n \right)\right),
\end{equation}
and
\begin{equation}\label{eq:thin3}
\Prob(N(v_1) \cap (V \setminus \{w\})=\{u\} ) =ce^{-c}\left(1+O\left(k/n \right)\right),
\end{equation}
since $u$ has to be different from $v_2,\ldots,v_k$. Observe also that $\Prob(N(v_1) \cap (V \setminus \{w\})=\emptyset)=\Theta(1)$ and
$\Prob(|N(v_1) \cap (V \setminus \{w\})|=2)=\Theta(1)$, and thus
\begin{equation}\label{eq:constprob}
\Prob(v_1 \mbox{ thin})=\Theta(1), \: \: \Prob(v_1 \mbox{ not thin})=\Theta(1).
\end{equation}
Now,
$$
\Prob(N(u) \cap (V \setminus \{w,v_1\})=\emptyset )=\Prob(N(v_1) \cap (V \setminus \{w\})=\emptyset)\left(1+O\left(1/n \right)\right)
$$
and, by expanding the recursion defined by~\eqref{eq:thin1} term by term, we see that
\begin{equation}\label{eq:expand}
\Prob(v_1 \mbox{ thin})-
\Prob(u \mbox{ thin } | \; \{u,v_1\} \in E ) \leq e^{-c}O(1/n)+e^{-c}\sum_{j \geq 1} (ce^{-c})^j \left(1+O(k/n)\right)^j \left(O(j/n )\right)^{2j+1}.
\end{equation}
Since the probability to have paths of length $\omega(\log n)$ is smaller than $n^{-5}$, say, the contribution of all terms $j \geq C \log n$ is at most $n^{-5}$, we can here and below safely ignore these terms to conclude that
$$
\Prob(v_1 \mbox{ thin})-
\Prob(u \mbox{ thin } | \; \{u,v_1\} \in E )=O(1/n),
$$
and by~\eqref{eq:constprob},
$$
\Prob(u \mbox{ thin } | \; \{u,v_1\} \in E )=\Prob(v_1 \mbox{ thin})(1+O(1/n)).
$$
Thus, plugging this into~\eqref{eq:thin1}, by~\eqref{eq:thin2} and~\eqref{eq:thin3},
$$
\Prob(v_1 \mbox{ thin}) = \frac{e^{-c}}{1-ce^{-c}} \left(1+O\left(k/n\right)\right)
$$
and
$$
\Prob(v_1 \mbox{ not thin}) = 1-\frac{e^{-c}}{1-ce^{-c}} \left(1+O\left(k/n\right)\right).
$$
Next, denote by $\mathcal{P}_{\ell}$ the event that a path of length $\ell \geq 0$ is attached to $v$ (not going through $w$), so that we can write
\begin{equation}\label{eq:decompose}
\Prob\left(v_2 \mbox{  thin} \; | \; v_1 \mbox{ thin}\right) = \sum_{\ell \geq 0}
\Prob\left(v_2 \mbox{  thin} \; | \; v_1 \mbox{ thin} \wedge \mathcal{P}_{\ell}\right) \; \;
\Prob\left( \mathcal{P}_{\ell} \; | \; v_1 \mbox{ thin}\right).
\end{equation}
We have
$$
\Prob\left( \mathcal{P}_{\ell} \; | \; v_1 \mbox{ thin}\right)=\Prob(\mathcal{P}_{\ell}) / \Prob(v_1 \mbox{ thin}),
$$
since $\Prob( v_1 \mbox{ thin } | \; \mathcal{P}_{\ell})=1$. Using~\eqref{eq:expdecreaseP} and~\eqref{eq:constprob}, we see
that
\begin{equation*}\label{eq:thin4}
\Prob\left( \mathcal{P}_{\ell} \; | \; v_1 \mbox{ thin}\right)=\alpha^{\ell+1}
\end{equation*}
for some $0 < \alpha < 1$. For the term $\Prob\left(v_2 \mbox{  thin} \; | \; v_1 \mbox{ thin} \wedge \mathcal{P}_{\ell}\right)$ note that $\ell$ additional vertices and its incident edges and non-edges have been exposed, giving an additional correction term of $O(\ell/n).$ Expanding then the recursive formula term by term as in~\eqref{eq:expand},
\begin{eqnarray*}\label{eq:expand2}
\Prob(v_1 \mbox{ thin})
- \Prob\left(v_2 \mbox{  thin} \; | \; v_1 \mbox{ thin} \wedge \mathcal{P}_{\ell}\right)
 \leq  e^{-c}O(\ell/n)+e^{-c}\sum_{j \geq 1} (ce^{-c})^j \left(1+O(k/n)\right)^j \left(O((\ell+j)/n )\right)^{2j+1},
\end{eqnarray*}
and hence, as before,
$$
\Prob\left(v_2 \mbox{  thin} \; | \; v_1 \mbox{ thin} \wedge \mathcal{P}_{\ell}\right) = \Prob(v_1 \mbox{ thin}) (1+O(\ell/n)).
$$
Thus, for the same $0 < \alpha < 1$ as above, \eqref{eq:decompose} gives
$$
\Prob\left(v_2 \mbox{  thin} \; | \; v_1 \mbox{ thin}\right)=  \Prob(v_1 \mbox{ thin}) \sum_{\ell \geq 0} (1+O(\ell/n)) \alpha^{\ell+1}=\Prob(v_1 \mbox{ thin})(1+O(1/n)),
$$
since $\sum_{\ell \geq 0}\Prob\left( \mathcal{P}_{\ell} \; | \; v_1 \mbox{ thin}\right)=\sum_{\ell \geq 0} \alpha^{\ell+1}=1.$
Hence,
$$
\Prob(v_1 \mbox{ thin} \; \wedge v_2 \mbox { thin})= \Prob\left(v_1 \mbox { thin}\right)^2 (1+O(1/n)).
$$
By~\eqref{eq:constprob}, we also have
\begin{eqnarray*}
\Prob\left(v_2 \mbox{  not thin} \; | \; v_1 \mbox{ thin} \wedge \mathcal{P}_{\ell}\right)&=&\Prob( v_1 \mbox { not thin})(1+O(\ell/n)) \\
\Prob\left(v_2 \mbox{ not thin} \; | \; v_1 \mbox{ not thin} \wedge \mathcal{P}_{\ell}\right)&=&\Prob( v_1 \mbox { not thin})(1+O(\ell/n))
\end{eqnarray*}
and thus in particular also
$$
\Prob(v_1 \mbox{ not thin} \; \wedge v_2 \mbox { not thin})= \Prob\left(v_1 \mbox { not thin}\right)^2 (1+O(1/n)).
$$
Denoting by $\mathcal{P}_{\ell}^i$ the event that vertex $v_i$ has a path of length $\ell$ attached to it (not going through $w$), expanding as in~\eqref{eq:expand}, we also have
$$	
\Prob(\mathcal{P}_{\ell_j}^j \; | \; \mathcal{P}_{\ell_1}^1 \wedge \ldots \wedge \mathcal{P}_{\ell_{j-1}}^{j-1}) = \Prob(\mathcal{P}_{\ell_j}^j) (1+O((\ell_1+\ldots+\ell_{j-1})/n)),
$$
since all vertices of previous paths and the incident edges are already exposed. Also, from
$$
\Prob(v_j \mbox{ thin } | \; v_1 \mbox{ thin } \wedge \ldots  \wedge v_{j-1} \mbox { thin})
$$ we get an additional error term of at most $(1+O((\ell_1+\ldots+\ell_{j-1})/n))$, and the same additional error term comes from
$$
\Prob(v_j \mbox{ thin } | \; v_1 \mbox{ thin } \wedge \ldots  \wedge v_{j-1} \mbox { thin } \wedge \mathcal{P}_{\ell_1}^1 \wedge \ldots \wedge \mathcal{P}_{\ell_{j-1}}^{j-1}).
$$
Hence, the cumulative error term for $$\Prob(v_j \mbox{ thin } | \; v_1 \mbox{ thin } \wedge \ldots  \wedge v_{j-1} \mbox { thin})$$ is of order at most $\left(1+O((\ell_1+\ldots+\ell_{j-1})/n)\right)^{j^2}$. Proceeding inductively as  in~\eqref{eq:decompose} and thereafter, we obtain thus  for the same $0 < \alpha < 1$ as above,
\begin{eqnarray*}
\Prob(v_j \mbox{ thin } | \; v_1 \mbox{ thin } \wedge \ldots  \wedge v_{j-1} \mbox { thin}) = \Prob(v_1 \mbox{ thin})\sum_{\ell_1 \geq 0} \ldots \sum_{\ell_{j-1} \geq 0} \alpha^{\sum_{i=1}^{j-1} \ell_{i}+1} \left(1+O((\ell_1+\ldots+\ell_{j-1})/n)\right)^{j^2}, \\
\end{eqnarray*}
and also inductively, as before
\begin{eqnarray*}
\Prob(v_j \mbox{ thin } | \; v_1 \mbox{ thin } \wedge \ldots  \wedge v_{j-1} \mbox { thin})&=&  \Prob(v_1 \mbox{ thin})(1+O(j^2/n)) \\
\Prob(v_j \mbox{ not thin } | \; v_1 \mbox{ not thin } \wedge \ldots  \wedge v_{j-1} \mbox { not thin})&=&  \Prob(v_1 \mbox{ not thin})(1+O(j^2/n)).
\end{eqnarray*}
Therefore,
$$
\Prob(v_1 \mbox{ not thin} \wedge \ldots \wedge v_k \mbox{ not thin})=\Prob(v_1 \mbox{ not thin})^k (1+O(k^3/n))
$$
Hence, for a vertex $w$ of degree $k$, we have
\begin{eqnarray*}
\Prob(w \mbox{ important})&=&1-\Prob(v_1 \mbox{ not thin} \wedge \ldots \wedge v_k \mbox{ not thin}) \\
&=&\left(1-\left(1-\left(1+O\left(k^3/n\right)\right)\frac{e^{-c}}{1-ce^{-c}}\right)^k\right) \\
&=&\left(1-\left(\frac{\left(1-(c+1)e^{-c}\right)\left((1+O(k^3/n))e^{-c}\right)}{1-ce^{-c}}\right)^k\right) \\
&=&1-\left(\frac{1-(c+1)e^{-c}}{1-ce^{-c}}\right)^k(1+o(1)) \\
&=&\left(1-\left(\frac{1-(c+1)e^{-c}}{1-ce^{-c}}\right)^k\right)(1+o(1)),
\end{eqnarray*}
and thus, denoting by $J_k$ the number of important vertices of degree $k$, we have
\begin{eqnarray*}
\E{J_k}&=& \E{D_k} \Prob(w \mbox{ important} \; | \; w \mbox{ has degree }k) \\
&=&\E{D_k}\left(1-\left(\frac{1-(c+1)e^{-c}}{1-ce^{-c}}\right)^k\right)(1+o(1)) \\
&=&\frac{nc^k}{k!}e^{-c}\left(1-\left(\frac{1-(c+1)e^{-c}}{1-ce^{-c}}\right)^k\right)(1+o(1)).
\end{eqnarray*}
By Slater's characterization, the metric dimension of a tree is the number of leaves minus the number of important vertices, except for the case of a path, in which case the metric dimension is only one, although there are two leaves and no important vertex. Denote by $Z$ the metric dimension of the connected components which are not trees. Then we have
$$
\E{\beta(G)}=\E{I}+\E{L}-\sum_{k \geq 3} \E{J_k}-\sum_{k \geq 2} \E{P_k}+\E{Z}. $$

By Lemma~\ref{lem:notrees} in expectation there are only $O(1)$ vertices in connected components which are not trees, and hence $\E{Z}=O(1).$ Thus, by the previous results,
$$
\E{\beta(G)}=ne^{-c}\left(1+c-\sum_{k\geq 3}\frac{c^k}{k!}\left(1-\left(\frac{1-(c+1)e^{-c}}{1-ce^{-c}}\right)^k\right)-\sum_{k\geq 2}\frac{1}{2}c^{k-1}e^{-(k-1)c}\right)(1+o(1)),
$$
and the lemma follows.
\end{proof}
Note that the constant given by Lemma~\ref{lem:expGnp}  coincides with the closed formula of $C$ in the statement of Theorem~\ref{thm:main-Gnp}, and the first part of this theorem is proven.
We move on to calculate the variance and will show the following result.
\begin{lemma}\label{lem:var}
$\V{\beta(G)}=\Theta(n).$
\end{lemma}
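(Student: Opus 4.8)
The plan is to prove $\V{\beta(G)}=\Theta(n)$ via a second-moment computation, showing that $\V{\beta(G)}=O(n)$ and $\V{\beta(G)}=\Omega(n)$ separately. By Slater's characterization restricted to tree components, we write $\beta(G)=I+L-\sum_{k\geq 3}J_k-\sum_{k\geq 2}P_k+Z$, exactly as in Lemma~\ref{lem:expGnp}. Since $\E{Z}=O(1)$ by Lemma~\ref{lem:notrees} and $Z\geq 0$, the contribution of $Z$ to the variance is negligible (it is $O(1)$ by a crude bound, using that $Z$ is supported on the $O(1)$ expected non-tree vertices and that component sizes are a.a.s.\ $O(\log n)$ by Lemma~\ref{lem:smallcomp}). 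So it suffices to control the variance of the main random variable $Y:=I+L-\sum_{k\geq 3}J_k-\sum_{k\geq 2}P_k$, which is a sum of indicator-type contributions attached to individual vertices.

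First I would establish the upper bound $\V{Y}=O(n)$. Writing $Y=\sum_{v\in V}Y_v$ where $Y_v$ is the local contribution of vertex $v$ (indicator of being isolated, of being a leaf, of being important of some degree, weighted appropriately, with paths handled by assigning each short path a bounded contribution), we expand $\V{Y}=\sum_{u,v}\big(\E{Y_uY_v}-\E{Y_u}\E{Y_v}\big)$. The key structural fact is locality: each $Y_v$ is determined by the connected component of $v$, and by Lemma~\ref{lem:smallcomp} all components have size $O(\log n)$ a.a.s. Two vertices $u,v$ in different components contribute correlations only through the weak global dependence inherent in $G(n,p)$; the dominant contribution to the covariance comes from pairs $u,v$ in the \emph{same} component. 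The expected number of ordered pairs in the same component is $O(n)$ by~\eqref{eq:nosamecomp}, and each such pair contributes $O(1)$ to the covariance since all the relevant conditional probabilities are $\Theta(1)$ by~\eqref{eq:constprob}. Pairs in distinct components contribute a covariance of size $O(1/n)$ each (this is exactly the type of estimate already proved in Lemma~\ref{lem:expGnp}, where joint probabilities factorize up to a $(1+O(\mathrm{poly}(k)/n))$ correction), and there are $O(n^2)$ such pairs, yielding a total $O(n)$ contribution. Summing gives $\V{Y}=O(n)$.

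For the lower bound $\V{Y}=\Omega(n)$ I would exhibit a single source of linear-order fluctuation rather than track cancellations across all terms. The cleanest route is to condition on a suitable sub-$\sigma$-algebra or to isolate one tree-type whose count has variance $\Omega(n)$ and is only weakly correlated with the rest. For instance, consider the number $I$ of isolated vertices, or the number $T_{k_0}$ of tree components of some fixed small size $k_0$: each of these is asymptotically a sum of nearly-independent indicators over the $n$ vertices (or over potential components), hence has variance $\Theta(n)$ by a direct binomial-type computation. One then argues that the full $\beta(G)$ inherits this fluctuation because, conditioned on the number of such components, the remaining contributions to $\beta(G)$ concentrate; formally, using the law of total variance $\V{\beta(G)}\geq \V{\E{\beta(G)\mid \mathcal{F}}}$ for an appropriate $\mathcal{F}$ generated by these weakly-correlated component counts, and showing the conditional expectation genuinely moves with $\mathcal{F}$.

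The main obstacle, and where most of the labor lies, is the covariance bookkeeping in the upper bound: one must verify that the factorization $\E{Y_uY_v}=\E{Y_u}\E{Y_v}(1+O(1/n))$ for $u,v$ in distinct components holds uniformly, across all the mixed cross-terms (leaf--important, important--important of differing degrees, path--leaf, and so on), using the exposure/recursion arguments of Lemma~\ref{lem:expGnp} applied now to \emph{two} vertices simultaneously. This is conceptually routine but genuinely lengthy, which is presumably why the authors flagged the details as such; the conceptual content is entirely captured by the locality of Slater's characterization and the exponential decay estimates~\eqref{eq:expdecrease}--\eqref{eq:nosamecomp}. Once the factorization is in hand, Stein's method (Theorem~\ref{thm:Stein}) with neighborhoods $K_v$ taken to be the vertices in the component of $v$ simultaneously delivers both the asymptotic normality and the error bound $O(n^{-1/2})$, since $|K_v|=O(\log n)$ a.a.s.\ makes $\varepsilon=O(n\cdot(\log n)^{O(1)}\cdot n^{-3/2})=o(1)$ after normalizing by $\sqrt{\V{\beta(G)}}=\Theta(\sqrt n)$.
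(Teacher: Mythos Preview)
Your upper-bound strategy coincides with the paper's: both expand the second moment of $I+L-\sum_k J_k-\sum_k P_k+Z$ and verify, cross-term by cross-term, that joint moments of contributions coming from distinct components factorize up to a $(1+O(1/n))$ correction, while contributions from the same component total $O(n)$ by~\eqref{eq:nosamecomp}. Your framing as a covariance decomposition $\sum_{u,v}\mathrm{Cov}(Y_u,Y_v)$ is a cosmetic reorganization of the same computation. Two minor remarks: the path count $P_k$ is not naturally a vertex sum, so ``assigning each short path a bounded contribution'' hides a small reindexing that the paper handles by treating $P_k$ directly as a component count; and your claim that the $Z$ contribution to the variance is $O(1)$ is stronger than what you justify (from $\E{Z}=O(1)$ alone one does not get $\V{Z}=O(1)$). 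The paper settles for the cruder $\E{Z^2}=O(n)$, which suffices.

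Your lower bound, however, is both more complicated and incomplete. The law-of-total-variance route $\V{\beta(G)}\geq \V{\E{\beta(G)\mid\mathcal{F}}}$ requires you to show that the conditional expectation genuinely fluctuates at scale $\sqrt n$, and the sentence ``the remaining contributions to $\beta(G)$ concentrate'' is exactly the step that needs proof; it is not obvious, since conditioning on $I$ or on $T_{k_0}$ affects the distribution of the remaining graph. The paper avoids this entirely with a one-line argument: there are $\Theta(n)$ tree components of size~$4$ in expectation, and conditional on a given $4$-set being a tree component, its isomorphism type is uniform over the $16$ labelled trees on four vertices, hence it is a path (metric dimension~$1$) with probability $3/4$ and a star (metric dimension~$2$) with probability $1/4$, independently across components. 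This already forces $\V{\beta(G)}=\Omega(n)$ without any conditioning on a sub-$\sigma$-algebra.

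Finally, the Stein's-method paragraph at the end belongs to the proof of the central limit theorem, not to this variance lemma; the paper separates the two.
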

\begin{proof}
First observe that since there is a constant probability of having a linear number of trees of size $4$, say,  in each connected component there is a constant probability to have metric dimension $1$ or $2$, and different connected components are independent, it is clear that $\V{\beta(G)}=\Omega(n)$. We now show that
$$\E{\beta(G)^2}=\E{\left(I+L-\sum_{k \geq 3}J_k - \sum_{k \geq 2}P_k + Z \right)^2}=(\E{\beta(G)})^2 (1+O(1/n)),$$
implying thus also $\V{\beta(G)}=O(n)$.

Define by $I_v$ the indicator variable which is $1$ if the vertex $v$ is isolated, and $0$ otherwise. Hence $I=\sum_{v \in V} I_v$. Observe that
\begin{align*}
\E{I^2}&=(\E{I})^2(1+O(1/n)) \\
 \E{IL}&=\E{I}\E{L}(1+O(1/n)) \\
  \E{I P_k}&=\E{I}\E{P_k}(1+O(1/n))
  \end{align*} and also
  $$\E{IJ_k}=\E{I}\E{J_k}(1+O(1/n))$$ for any $k$, as one isolated vertex still leaves total freedom on the remaining $n-1$ vertices.

Define furthermore by $L_v$ to be the indicator variable which is $1$ if the vertex $v$ is a leaf, and $0$ otherwise. Note that $L=\sum_{v \in V} L_v$. Considering all pairs of leaves, and distinguishing upon the fact whether they are either connected by an edge, share the same neighbor, or do not share the same neighbor, we have
\begin{eqnarray*}
\E{L^2}&=&\sum_{v \in V} \E{L_v}+\sum_{v \neq w} \E{L_v L_w} \\
&=&\E{L}+n(n-1)\left( p(1-p)^{2n-4}+(n-2)p(1-p)^{2n-5}p+(n-2)(n-3) p^2 (1-p)^{2n-5}\right) \\
&=& (\E{L})^2 (1+O(1/n)).
\end{eqnarray*}
Next, denote by $P_{v_1,\ldots,v_k}$ the indicator variable which is $1$ if the vertices $v_1,\ldots,v_k$ form a connected component which is a path of length $k$ in this order, and $0$ otherwise. Note that $P_k = \sum P_{v_1,\ldots,v_k}$, where the sum is taken over all $k$-tuples of different vertices with the property that the index of $v_1$ is smaller than the index of $v_k$ (recall that there are a total of $\frac{k!}{2}$ labelled paths of length $k$). Then, the contribution to $$\E{LP_k}=\E{\left( \sum_v {L_v} \sum_{v_1,\ldots,v_k} {P_{v_1,\ldots,v_k}}\right)} $$ either comes from a path $P_k$, where $v=v_1$ or $v=v_k$, or from two different connected components. The first term  gives the contribution $\E{P_k}$, and thus this term gives at most $O(n)$ after summing over all $k$.

For the second term, since the contribution comes from different connected components, the random variables are independent, and the only error term comes from the fact that vertices forming part of the connected component of $v$ are excluded from the consideration. More formally, $$\E{LP_k}=\E{L}\E{P_k}(1+O(\ell/n)),$$ where $\ell$ denotes the size of the connected component the leaf $v$ belongs to. Call $u$ the only vertex $v$ is adjacent to and observe the following: conditioning under the fact that $v$ is a leaf in a tree, all trees pending at $u$ in the graph excluding $v$ are still possible, and their occurrences follow the same probability distribution as the trees $T_k$ in the original graph. In particular, the results from~\eqref{eq:expdecreaseP} apply and thus, conditional under the fact that $\ell \geq 2$, the probability of having a connected component of size $\ell \geq 2$ decreases exponentially in $\ell$. Therefore, denoting by $C_v^{\ell}$ the binary random variable yielding $1$ if the connected component $v$ belongs to has size $\ell$, given that $v$ is a leaf, we have for some $0 < \alpha < 1$
\begin{eqnarray*}
\E{LP_k} &= &\sum_{\ell \geq 2} \E{L}\E{P_k}\Prob{\left(C_v^{\ell}=1\right)}(1+O(\ell/n)) \\
&= & \E{L}\E{P_k} \sum_{\ell \geq 2} \alpha^{\ell}(1+O(\ell/n)) \\
&=&  \E{L}\E{P_k} \left( 1 + O(1/n) \right),
\end{eqnarray*}
where the last line follows from the fact that $\sum_{\ell \geq 2}\Prob(C_v^{\ell}=1)=\sum_{\ell \geq 2} \alpha^{\ell}=1$, and also from the fact that $\sum_{\ell \geq 2} \alpha^{\ell}O(\ell/n)=O(1/n).$

Similarly, for $\E{L J_k}$, first recall that by~\eqref{eq:nosamecomp},  in expectation there are only $O(n)$ pairs of vertices belonging to the same connected component, and even when summing over all $k$, we may safely discard them.
%
Otherwise, the contribution comes from two different connected components. The events are independent, and the error term comes from the size $\ell$ of the connected component the leaf $v$ belongs to. Using the notation and the argument as in the analysis of $\E{LP_k}$,
\begin{eqnarray*}
\E{LJ_k} &= &\sum_{\ell \geq 2} \E{L}\E{J_k}\Prob{\left(C_v^{\ell}=1\right)}(1+O(\ell/n)) \\
&=&  \E{L}\E{J_k} \left( 1 + O(1/n) \right).
\end{eqnarray*}

Furthermore, $$\E{P_k^2}=\E{P_k}+\sum \E{P_{v_1,\ldots,v_k} P_{w_1,\ldots,w_k}},$$ where the sum is over all pairs of $k$-tuples which have no vertex in common. Thus, $$\E{P_k^2}=\E{P_k}+(\E{P_k})^2\left(1+O\left(k/n\right)\right).$$  In fact, $$\sum_{k \geq 2} \E{P_k^2}=O(n)+\sum_{k \geq 2} (\E{P_k})^2 \left(1+O\left(k/n\right)\right).$$  For paths of different lengths, the contributions must come from different connected components, and thus, by the same argument we also have for $k < \ell$, $$\E{P_k P_{\ell}}=\E{P_k}\E{P_{\ell}}(1+O\left(k/n\right)).$$ The same argument also shows that for any $k \geq 2$, $\ell \geq 3$, we have $$\E{P_k J_{\ell}}=\E{P_k}\E{J_{\ell}}(1+O\left(k/n\right)).$$ In all cases, even when summing over all $k$ and $\ell$ the contribution of pairs of vertices coming from the same connected component is $O(n)$.

Next, consider $\E{J_k J_{\ell}}$ for $k \geq \ell \geq 3$. As in the analysis of $\E{LP_k}$, by~\eqref{eq:nosamecomp}, pairs of vertices belonging to the same connected component may be disregarded, since in expectation there are only $O(n)$ of them, even when summed over all $k$ and $\ell$.
For pairs of vertices coming from different connected components, observe that the two events are independent. Moreover, we now show that conditioning under the fact that a vertex is important of degree $\ell$, the size of its connected component decreases exponentially, given that it is at least $\ell+1$:
indeed, knowing that a vertex $w$ is important of degree $\ell$ with neighbors $v_1,\ldots,v_{\ell}$ forbids those trees, where each of the vertices $v_1,\ldots,v_{\ell}$ has $2$ or more neighbors other than $w$. Therefore, all trees with at most $3\ell$ vertices are still allowed, and only from then on some trees are forbidden.

Observe the following: once a labelled tree on $m$ vertices, say with labels in $[m]$, is forbidden, by adding a new vertex, say with label $m+1$, to any of the vertices of the tree different from $w$  the tree remains forbidden. On the other hand, a tree which is still allowed may become forbidden by  adding a new vertex. Moreover, for each labelled tree on $m$ vertices, one can always obtain $m-1$ different trees by adding one new vertex with label $m+1$: attaching to $w$ is not allowed, and attaching the new vertex to any other vertex  always gives different labelled trees. Also, for two different labelled trees of size $m$, any two resulting trees of size $m+1$ are different, and any tree of size $m+1$ can be constructed in exactly one way from exactly one tree of size $m$. Thus, the fraction of trees which is forbidden increases as $m$ increases. Since by~\eqref{eq:expdecrease}, the number of all trees decreases exponentially in $m$, and by forbidding certain trees the fraction of forbidden trees of a given size also increases with $m$, the sizes of connected components clearly also decrease exponentially.

Hence, denoting by $C_w^{m}$ the binary random variable which is $1$ if the connected component of $w$ has size $m \geq \ell+1$, we have for some $0 < \gamma < 1$
\begin{eqnarray*}
\E{J_k J_{\ell}}&=&\sum_{m \geq \ell+1} \E{J_k}\E{J_{\ell}} \Prob{\left(C_v^{m}=1\right)}(1+O(m/n)) \\
&= & \E{J_k}\E{J_{\ell}} \sum_{m \geq \ell+1} \gamma^{m}(1+O(m/n)) \\
&=&  \E{J_k}\E{J_{\ell}} \left( 1 + O(\ell/n) \right),
\end{eqnarray*}
where we used for the last line again the fact that $\sum_{m \geq \ell+1}\Prob(C_v^m=1)=\sum_{m \geq \ell+1}\gamma^m=1$.

For the remaining terms such as $O\left(\sum_k \E{P_k Z}\right)$ and $O\left(\sum_k \E{J_k Z}\right)$ observe the following: since $J_k$ and $P_k$ are only nonzero for trees,  by definition of $J_k$ and $P_k$, the contribution of these terms has to come from different connected components. In the case of $P_k$, $k$ vertices are forbidden, and one obtains using~\eqref{eq:expdecreaseP}, $$\sum_k \E{P_k}\E{Z}\left(1+O\left(k/n\right)\right),$$ which by Lemma~\ref{lem:notrees} can be bounded by $O(n)$.

In the case of $J_k$, by the same argument as in the case of the contribution of $\E{J_k J_{\ell}}$, the sizes of connected components decrease exponentially, given that they are at least $k+1$, and then the same result holds as well. The contribution of $\E{Z^2}$ can be bounded by all pairs of indicator variables belonging to a connected component which is not a tree. For each such a pair, the probability is at most $O\left(1/n\right)$, since this is the probability for one vertex to be not in a tree, and as there are at most $n^2$ pairs, this contribution can also be bounded by $O(n)$.

Finally, by~\eqref{eq:expdecreaseP} and by the above argument for the contribution of $\E{J_k J_{\ell}}$, both $\E{P_k}$ and $\E{J_k}$ decrease exponentially in $k$, and thus the cumulative errors of the terms
\begin{align*}
\sum_k \left(\E{I J_k}-\E{I}\E{J_k}\right), \\
\sum_k \left(\E{I P_k}-\E{I}\E{P_k}\right), \\ 
\sum_k \left(\E{L J_k}-\E{L}\E{J_k}\right), \\ 
\sum_k (\E{L P_k}-\E{L}\E{P_k}), \\
\sum_{k,\ell} \left(\E{J_k J_{\ell}}-\E{J_k}\E{J_{\ell}}\right), \\
\sum_{k,\ell} \left(\E{P_k P_{\ell}}-\E{P_k}\E{P_{\ell}}\right), \\
\sum_{k,\ell} (\E{P_k J_{\ell}}-\E{P_k}\E{J_{\ell}})
\end{align*} are still at most $O(n)$. Hence, the proof of the lemma is finished.
\end{proof}

To conclude the proof, we must show the normal limiting distribution stated in Theorem~\ref{thm:main-Gnp}. We apply Stein's Method from Theorem~\ref{thm:Stein} in the following way: set $W=\frac{\beta(G) - \E{\beta(G)}}{\sqrt{\V{\beta(G)}}}$, and write $W=\sum_{v \in V} X_v$, where $X_v=\frac{Y_v - \E{Y_v}}{\sqrt{\V{\beta(G)}}}$ with $Y_v$ being the indicator variable being $1$ if the vertex $v$ is added to a fixed minimal resolving set (chosen uniformly at random from all minimal resolving sets), and $0$ otherwise. Then $\E{X_v}=0$ and $\E{W^2}=1$. Moreover, write $W=W_i+Z_i$, for any $i \in [n]$,  where $Z_i=\sum_{k \in K_i} X_k$, with $K_i \subseteq [n]$ being the set of indices of those vertices belonging to the same connected component as the vertex with index $i$. Clearly, $W_i$ is independent of both $X_i$ and $Z_i$, and all random variables have finite variance.  For the calculation of $\varepsilon$ as in Theorem~\ref{thm:Stein}, note that for any $i,k,\ell$ belonging to the same connected component, we have
 $$
\E{|X_iX_kX_{\ell}|}=\E{\left|\frac{(Y_i-\E{Y_i})(Y_k-\E{Y_k})(Y_{\ell}-\E{Y_{\ell}})}{(\V{\beta})^{3/2}}\right|} \leq \frac{1}{(\V{\beta(G)})^{3/2}}
$$
and similarly also $$\E{|X_iX_k|}\E{|X_{\ell}|} \leq (\V{\beta(G)})^{-3/2}.$$
By~\eqref{eq:expdecrease}, the probability of belonging to a connected component of size $m$ decreases exponentially in $m$, and hence for the total contribution to $\varepsilon$ we have
$$
\varepsilon = \sum_{i} \sum_{k,\ell \in K_i}2\left(\E{|X_iX_kX_{\ell}|}+\E{|X_iX_k|}\E{|X_{\ell}|} \right) = O\left(\frac{n}{(\V{\beta(G)})^{3/2}}\right).$$
By Lemma~\ref{lem:var}, this gives $\varepsilon=O({n}^{-1/2}),$ and by Theorem~\ref{thm:Stein}, the second part of (ii) of Theorem~\ref{thm:main-Gnp} now follows.\\\\
\begin{remark}
An anonymous referee pointed out the possibility of applying the generating function techniques (in particular the system of equations~\eqref{eq:system1}) also to the $G(n,p)$ model.
The sketch of the idea is the following: denote as in Section \ref{subsec:system} by $T(x,y)$  the generating function of unrooted trees where $x,\,y$ mark vertices and the metric dimension, respectively.
Since the expected number of occurrences of each given tree of size $k$ is equal to $\frac{n}{c}\frac{(c e^{-c})^k}{k!}(1+o(1))$, we have $\E{\beta(G(n,c/n))}=\frac{n}{c} T_y(ce^{-c},1)(1+o(1))$, with $T_y$ denoting the derivative of $T$ with respect to $y$. In this way, the leading constant of the linear term in $\E{\beta(G(n,c/n))}$ can be calculated using generating function techniques.

When trying to do the same with the variance, however, there are some technical details that limit this approach: first, a more precise expression for the expected number of trees of size $k$ is needed, since for the second moment calculations all terms that are at least constant play a non-negligible role. While Stirling's formula can be applied to get a more precise estimation of the expected number of occurrences of a given tree - the term $(1+o(1))$ can be replaced by $1+O(k^2/n)$, and even as $1+c_0/n + c_1 k/n +c_2 k^2/n + o(1/n)$ for some explicit values of $c_0,c_1,c_2$ -  these constants would have to be suitably incorporated into an exponential type generating function (exponential in $k$). While this seems tedious, but still doable, second, even worse, since in expectation there is a constant number of vertices in unicyclic components, we would have to have a Slater-type characterization for the metric dimension of those.

Unfortunately, we are not aware of such a characterization, and using generation functions we could at best also get at most that the variance is linear, without finding the leading constant for the linear term. We thus opted for the classical second moment method.

\end{remark}
\paragraph{\textbf{Acknowledgements.}} This paper started while the first author visited ICMAT in Madrid in January 2013. He thanks this institution for their hospitality. The second author also thanks the support and hospitality of Charles University in Prague, where the final stage of this work was achieved. The authors would like to thank the anonymous referees whose useful comments helped to improve the presentation of the paper.

\end{document}